\documentclass[12pt]{amsart}
\usepackage{graphicx}
\usepackage{amssymb,amsmath,enumerate,amsthm,soul}
\usepackage{epstopdf,cancel,algorithm,algpseudocode,color}

\textwidth = 6.5 in
\textheight = 8.75 in
\oddsidemargin = 0 in
\evensidemargin = 0 in
\topmargin = 0 in
\headheight = 0.0 in
\headsep = 0.25 in
\parskip = 0.1in
\parindent = 0.0in

\newtheorem{thm}{Theorem}[section]
\newtheorem{cor}[thm]{Corollary}
\newtheorem{lem}[thm]{Lemma}
\newtheorem{prop}[thm]{Proposition}
\newtheorem{defn}[thm]{Definition}
\theoremstyle{remark}
\newtheorem{rem}[thm]{Remark}

\begin{document}

\title{An Enumeration Process for Racks}

\author{Jim Hoste}
\address{Pitzer College,
1050 N Mills Avenue,
Claremont, CA 91711}
\email{jhoste@pitzer.edu}

\author{Patrick D. Shanahan}
\address{Loyola Marymount University, Dept.~of Mathematics UHall 2700, Los Angeles, CA 90045}
\email{pshanahan@lmu.edu}

\maketitle
\centerline{\today
}

\begin{abstract} Given a presentation for a rack $\mathcal R$, we define a process which systematically enumerates the elements of $\mathcal R$. The process is modeled on the systematic enumeration of cosets first given by Todd and Coxeter. This generalizes and improves the diagramming method for $n$-quandles introduced by Winker.  We provide pseudocode that is similar to that given by Holt for the Todd-Coxeter process.  We prove that the process terminates if and only if $\mathcal R$ is finite,  in which case, the procedure outputs an operation table for the finite rack. We conclude with an application to knot theory.
\end{abstract}

\section{Introduction}

The fundamental quandle of an oriented knot or link is an algebraic invariant which was proven to be a complete invariant of knots (up to mirror reversal) by Joyce \cite{J2}. See also Matveev \cite{Ma}. While it is easy to find a presentation of the quandle of a link using a modification of the Wirtinger algorithm, it is usually  difficult to determine the quandle's isomorphism class. A more tractable, but less sensitive, invariant is the $n$-quandle of a link which is a certain quotient of the fundamental quandle.  

In his Ph.\,D. thesis \cite{WI}, Winker introduced a method to produce a Cayley diagram of the $n$-quandle of a link. His diagramming method is a graph-theoretic modification of a fundamental process in computational group theory called the Todd-Coxeter process \cite{TC}.  This process was introduced to find the index of a finitely generated subgroup $H$ in a finitely presented group $G$. In addition, the process produces a table which describes the right action of $G$ on the set of cosets of $H$. The process is incorporated in many computer algebra systems.

Sarah Yoseph made a preliminary investigation of a Todd-Coxeter like process for the enumeration of $n$-quandles in her (unpublished) undergraduate senior thesis directed by the second author. Her work complemented Winker's by considering a table-based approach to $n$-quandle enumeration and producing elementary pseudocode. In this paper, we apply the table-based approach to the more general structure of a rack.  Our development of an enumeration process for a rack $\mathcal{R}$ given by a presentation $\langle S \, | \, R \, \rangle$ will be modeled on the exposition of the Todd-Coxeter process given in Holt \cite{HO}.  The rack enumeration process we present extends Winker's work to the study of racks and provides pseudocode for its implementation.

An important feature of the currently accepted Todd-Coxeter process is that if the index of $H$ is finite, then the process will terminate in a finite number of steps. In \cite{WA}, Ward showed that this was not true of the original process and provided a modification to the process to eliminate this problem. Using arguments similar to those in \cite{HO}, we prove that if our rack enumeration process completes, then the resulting output is rack isomorphic to $\mathcal R$ and, moreover, that the process completes if and only if $\mathcal R$ is finite. We also provide an example demonstrating the importance of Ward's modification in the rack setting as well.

In the special case of quandles,  the Todd-Coxeter process could be used in theory to determine the structure of any finite quandle. This is because Joyce proved that every quandle $Q$ is isomorphic to a quandle structure on the set of cosets of a particular subgroup of the automorphism group of $Q$. However, employing this approach would require determining a presentation for $\mbox{Aut}(Q)$ and generators for the appropriate subgroups which may not be practical. In the case of knot and link quandles, Joyce also proved that the coset quandle of the peripheral subgroups of the fundamental group is isomorphic to the fundamental quandle of the link. The authors extend this result to $n$-quandles of links in \cite{HS}. Hence, the Todd-Coxeter process can be used to investigate the structure of the $n$-quandle of a link, giving an alternative to Winker's method.  Given these theoretical and practical limitations, it is desirable to have an enumeration  procedure which applies directly to any finitely presented rack.

In Section~2 we review the basic definitions of racks and rack presentations. We introduce enumeration tables and the rack enumeration process in Section~3. We prove that the tables produced satisfy five basic properties which are used later to prove the main result in Section~4.  We also include pseudocode for the processes introduced in this section. Finally, in Section~4, we prove that if the process completes, then the output is isomorphic to the rack and, moreover,  that  a finitely presented rack is finite if and only if the process completes. In Section~5, we provide an example showing the importance of Ward's modification in the rack setting and discuss an alternative modification. We conclude with an application to knot theory. The authors thank the referees for their detailed and helpful comments on the article.

\section{Racks and presentations}

We begin with the definition and some basic properties of racks. Excellent sources for this material are \cite{FR}, \cite{EN}, \cite{J1}, \cite{J2}, and \cite{WI}.

\begin{defn}
\label{defnrack}
A set $\mathcal R$ with two binary operations $\rhd$ and $\rhd^{-1}$ is a {\bf rack} if the following two properties hold:
\begin{enumerate}
\item[\bf R1.] $(x \rhd y) \rhd^{-1} y = (x\rhd^{-1}y)\rhd y = x$ for all $x, y \in \mathcal R$, and
\item[\bf R2.] $(x\rhd y) \rhd z = (x\rhd z) \rhd (y \rhd z)$ for all $x, y, z \in \mathcal R$. 
\end{enumerate}
\end{defn}

\noindent Properties R1 and R2 are sometimes referred to as the right cancellation and right self-distributive axioms, respectively. It is easy to show that R1 and R2 imply $(x\rhd^\epsilon y) \rhd^\delta z = (x\rhd^\delta z) \rhd^\epsilon (y \rhd^\delta z)$ for $\epsilon, \delta \in \{-1,1 \}$. In general, a rack is non-associative and the following well-known lemma can be used to rewrite any product as a left-associated product.

\begin{lem} \label{qprop}
Let $\mathcal R$ be a rack and $x, y, z \in \mathcal{R}$. If $\epsilon, \delta \in \{-1,1 \}$, then 
$x \rhd^{\epsilon} (y \rhd^{\delta} z) = ((x \rhd ^ {-\delta} z) \rhd^{\epsilon} y) \rhd^{\delta} z.$
\end{lem}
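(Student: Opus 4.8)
The plan is to derive the identity directly from axiom R1 together with the generalized self-distributive law
\[
(a \rhd^{\epsilon} b) \rhd^{\delta} c = (a \rhd^{\delta} c) \rhd^{\epsilon} (b \rhd^{\delta} c)
\]
recorded in the paragraph preceding the lemma. First I would apply this law to the right-hand side of the claimed identity with the substitution $a = x \rhd^{-\delta} z$, $b = y$, $c = z$, which rewrites
\[
\bigl((x \rhd^{-\delta} z) \rhd^{\epsilon} y\bigr) \rhd^{\delta} z
= \bigl((x \rhd^{-\delta} z) \rhd^{\delta} z\bigr) \rhd^{\epsilon} (y \rhd^{\delta} z).
\]
Then I would invoke R1 in the form $(x \rhd^{-\delta} z) \rhd^{\delta} z = x$ to collapse the inner expression, leaving exactly $x \rhd^{\epsilon} (y \rhd^{\delta} z)$, the left-hand side. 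That is the whole argument.

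The only point that requires any care is the generalized self-distributive law itself, which the text asserts but does not prove in detail; if a self-contained treatment is wanted here, I would verify it by a short case analysis over the four sign choices for $(\epsilon,\delta)$, each case reducing to R2 after one or two applications of R1 to cancel a $\rhd^{\pm 1} z$ pair. Note that once that law is available, the main computation above is uniform in $\epsilon$ and $\delta$, so no separate cases are needed to finish the lemma — the substitution works verbatim for all sign choices.

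I do not anticipate a genuine obstacle: the result is a purely formal consequence of the axioms, and the manipulation is essentially two lines. The subtlety, if any, is bookkeeping with the $\pm 1$ exponents, in particular making sure the instance of R1 used is $(x \rhd^{-\delta} z) \rhd^{\delta} z = x$ rather than its companion $(x \rhd^{\delta} z) \rhd^{-\delta} z = x$, and that the exponents on the two outer operations in the distributivity step are matched correctly ($\epsilon$ on the first factor, $\delta$ on the second).
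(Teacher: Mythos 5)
Your proposal is correct and is essentially the paper's own proof read in the opposite direction: the paper starts from the left-hand side, rewrites $x$ as $(x \rhd^{-\delta} z)\rhd^{\delta} z$ by R1, and then applies the same generalized distributive law $(a\rhd^{\epsilon} b)\rhd^{\delta} c = (a\rhd^{\delta} c)\rhd^{\epsilon}(b\rhd^{\delta} c)$ that you invoke. Both arguments rest on that law, which the paper likewise asserts without detailed proof, so nothing further is needed.
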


\begin{proof} Using the cancellation and one of the distributive properties we have:
$$x \rhd^{\epsilon} (y \rhd^{\delta} z) = ((x \rhd^{-\delta} z) \rhd^{\delta} z)\rhd^{\epsilon} (y \rhd^{\delta} z)= ((x \rhd ^ {-\delta} z) \rhd^{\epsilon} y) \rhd^{\delta} z.$$
\end{proof}

A convenient notation introduced by Fenn 
and Rourke in \cite{FR} uses Lemma~\ref{qprop} to avoid the use of parentheses.  From this point on, we shall adopt Fenn and Rourke's {\bf exponential notation} defined by
$$x^y = x \rhd y  \ \ \ \mbox{and}\ \ \ x^ {\bar y} = x \rhd ^{-1} y.$$
With this notation, $x^{yz}$ will represent $(x^y)^z = (x \rhd y) \rhd z$, whereas, by Lemma~\ref{qprop}, $x^{\bar z y z}$ will be used to represent $x^{(y^z)}= x \rhd ( y \rhd z)$.

Given an integer $m$, we will also let $x^{y^m}$ denote $x^{y \dots y}$ if $m>0$, $x$ if $m=0$, and $x^{\bar y \dots \bar y}$ if $m<0$, where in each case there are $|m|$ factors of $y$ or $\bar y$ in the exponent.

\begin{defn}
\label{defquandle}
A rack $Q$ is called a {\bf quandle} if $x \rhd x = x$ for all $x \in Q$.  Further, if $n \ge 2$ is an integer, then a quandle $Q$ is called an {\bf \boldmath $n$-quandle} if $x^{y^n}=x$ for all $x,y \in Q$.
\end{defn}

\noindent Notice that in an $n$-quandle we also have that $x^{\bar y^n}=x$.  A 2-quandle is also called an {\bf involutory
quandle}. 

Following Fenn and Rourke, we define a presentation $\langle S \, | \, R \, \rangle$ of a rack with generating set $S$ and relations $R$ as a quotient of a free rack. For any set $S$, let $F(S)$ denote the free group on $S$ and in this group let $\bar w$ represent the inverse of the element $w$. 

\begin{defn}
The {\bf free rack} on $S$ is the set of equivalence classes
$$FR(S)=\{\left[a^w \right] \mid a\in S, w \in F(S)\}$$
where $\left[ a^u \right] = \left[ b^v \right]$ if $a=b$ in $S$ and $u=v$ in $F(S)$. The operations in $FR(S)$ are defined by
$\left[a^u \right] \rhd \left[ b^v \right] = \left[a^{u \bar v b v} \right]$ and $\left[ a^u \right] \rhd^{-1} \left[b^v \right]= \left[a^{u \bar v \bar b v} \right]$.
\end{defn}

\noindent From this point on, we will abuse notation and simply let $a^u$ represent the equivalence class $\left[ a^u \right]$. 

A {\bf congruence} on a rack $\mathcal R$ is an equivalence relation $\sim$ that respects the operations. In particular, if $\mathcal{R} = FR(S)$, then a congruence  is a relation with the property that if $a^s \sim b^t$ and $x^u \sim y^v$, then $a^{s \bar u  x u} \sim b^{t \bar v y v}$ and $a^{s \bar u  \bar x u} \sim b^{t \bar v \bar y v}$. Given a congruence on $FR(S)$, then the congruence classes form a quotient of $FR(S)$ that is itself a rack. This notion of a quotient rack allows us to define a rack in terms of generators and relations.

Let $S$ be a finite set of generators and let $R$ be a finite set of relations in $FR(S)$. That is, $R$ is a finite set of ordered pairs of 
the form $(a^u,b)$ where $a,b \in S$ and $u \in F(S)$.  More formally, 
$$R = \{ (a_i^{u_i}, b_i) \mid a_i,b_i \in S, u_i \in F(S), 1\le i \le r\} \subseteq FR(S) \times FR(S).$$
The rack given by the presentation $\langle S \, | \, R \, \rangle$ is then defined to be the quotient of $FR(S)$ by the smallest congruence $\sim_R$ containing $R$. The smallest congruence is described more concretely by Fenn and Rourke in terms of {\em consequences} of the relations in $R$.   Using their work we can derive the following proposition.

\begin{prop} 
\label{submoves} If $\mathcal{R} = \langle S \, | \, R \, \rangle$, then $x^s \sim_R y^t$ if and only if $x^s$ can be taken to $y^t$ by a finite sequence of the following substitutions or their inverses. For all $a,b,c \in S$ and $u,v,w \in F(S)$:
\begin{enumerate}
\setlength\itemsep{.2em}
\item Replace $a^{uw}$ with $a^{uv\bar{v}w}$.
\item If $(a^u,b) \in R$, then replace $a^{uw}$ with $b^{w}$.
\item If $(a^u,b) \in R$, then replace $c^{vw}$ with either
$c^{v\bar{u}au \bar{b} w}$ or $c^{v\bar{u}\bar{a}u bw}$.
\end{enumerate}
\end{prop}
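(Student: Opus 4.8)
The plan is to introduce the binary relation $\approx$ on $FR(S)$ for which $x^s\approx y^t$ precisely when $x^s$ can be carried to $y^t$ by a finite sequence of the three substitutions or their inverses, and then to prove $\approx\,=\,\sim_R$. Each substitution is patently reversible, so $\approx$ is reflexive (empty sequence), symmetric (reverse the sequence and invert each step), and transitive (concatenate sequences); hence $\approx$ is an equivalence relation, and only the two inclusions $\approx\,\subseteq\,\sim_R$ and $\sim_R\,\subseteq\,\approx$ need argument. Morally this is the rack analogue of the description of $\sim_R$ in terms of \emph{consequences} of relations due to Fenn and Rourke \cite{FR}.

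For $\approx\,\subseteq\,\sim_R$ I would check that each elementary substitution carries an element of $FR(S)$ to a $\sim_R$-equivalent one. Substitution (1) only replaces the exponent by an $F(S)$-equal word, so it does not change the element of $FR(S)$ at all. For substitution (2) one uses $(a^u,b)\in R\subseteq\,\sim_R$ and the fact that $\sim_R$ is a congruence: reading the group word $w$ as an iterated $\rhd^{\pm 1}$ by generators, $a^{uw}=(a^u)^w\sim_R b^w$. For substitution (3) one again starts from $(a^u,b)\in R$, applies the congruence to get $c^v\rhd a^u\sim_R c^v\rhd b$, operates on both sides by $\rhd^{-1}b$ and uses \textbf{R1} to obtain $c^{v\bar uau\bar b}\sim_R c^v$, and then appends $w$; the second form of (3) is symmetric, using $\rhd^{-1}a^u$ followed by $\rhd b$. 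Transitivity of $\sim_R$ then gives the inclusion.

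For $\sim_R\,\subseteq\,\approx$ I would show that $\approx$ is a congruence on $FR(S)$ containing $R$; the inclusion then follows since $\sim_R$ is, by definition, the smallest such congruence. That $R\subseteq\,\approx$ is substitution (2) with $w$ trivial. For closure under the operations one must show that $a^s\approx b^t$ and $x^u\approx y^v$ imply $a^{s\bar uxu}\approx b^{t\bar vyv}$ and $a^{s\bar u\bar xu}\approx b^{t\bar v\bar yv}$, which I would obtain from two reductions. First, one may append a fixed group word $\sigma$ to the exponents on both sides of any $\approx$-relation: each substitution is stated with an arbitrary trailing factor, so a substitution carrying $\alpha^{p w}$ to $\beta^{q w}$ equally carries $\alpha^{p w_0\sigma}$ to $\beta^{q w_0\sigma}$, and running this along a sequence yields $a^{s\sigma}\approx b^{t\sigma}$; with $\sigma=\bar uxu$ and $\sigma=\bar u\bar xu$ this gives $a^s\rhd^{\pm 1}x^u\approx b^t\rhd^{\pm 1}x^u$. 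Second, one may change the operand: $x^u\approx y^v$ implies $a^{s\bar uxu}\approx a^{s\bar vyv}$, which I would prove by processing the sequence from $x^u$ to $y^v$ one substitution at a time and simulating its effect inside the exponent $\bar uxu$ of $a^s$ — a use of (1) on the operand is absorbed by uses of (1) in the exponent; a use of (2) coming from a relation $(e^p,f)\in R$ is simulated by inserting the pattern $\bar pep\bar f$ via substitution (3) and cancelling; and a use of (3) coming from $(e^p,f)\in R$ is simulated by inserting the pattern $\bar pep\bar f$ (or its inverse $f\bar p\bar ep$, realized by first inserting a trivial pair $f\bar f$ and then applying the other form of (3)) at the two places in $\bar uxu$ dictated by the substitution. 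Combining the two reductions with transitivity gives closure under $\rhd$ and $\rhd^{-1}$, so $\approx$ is a congruence and the proof is complete.

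The main obstacle I anticipate is this last simulation step: keeping the free-group bookkeeping straight when the base letter of the operand — which sits conjugated as the middle letter of $\bar uxu$ — is itself rewritten by substitutions (2) or (3), and checking that the asymmetry between the two forms of substitution (3) (they insert $\bar uau\bar b$ versus $\bar u\bar aub$, which are not mutual inverses) never prevents one from producing the patterns needed. Everything else is a routine check that the three substitutions are exactly the consequences of relations in Fenn and Rourke's exponential calculus.
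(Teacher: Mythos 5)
Your proposal is correct, and it is worth noting that it supplies more than the paper does: the paper does not actually prove Proposition~\ref{submoves}, it only remarks that one must identify the congruence generated by the substitution moves with the congruence generated by Fenn--Rourke's \emph{consequences} of the relations, and leaves the details to the reader. Your route is self-contained and works directly from the definition of $\sim_R$ as the smallest congruence containing $R$: the inclusion $\approx\,\subseteq\,\sim_R$ is the routine check you describe (move (1) does not change the element of $FR(S)$; move (2) follows from $a^u\sim_R b$ by acting letter-by-letter with the word $w$; move (3) follows by acting on $c^v$ with $a^u\sim_R b$ and cancelling with R1), and for $\sim_R\,\subseteq\,\approx$ your two reductions do establish that $\approx$ is a congruence containing $R$. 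The delicate points you flag all go through: appending a fixed trailing word $\sigma$ is legitimate because every move is stated with an arbitrary suffix; an operand rewrite $e^{pw}\mapsto f^w$ coming from $(e^p,f)\in R$ is simulated inside the exponent as the inverse of a single type (3) move applied to $a^{s\bar w f w}$ (which produces $a^{s\bar w\bar pep\bar f f w}=a^{s\bar w\bar pepw}$), the $\rhd^{-1}$ case (middle letter $\bar e\mapsto\bar f$) is handled identically by the second form of (3); and an operand type (3) rewrite requires exactly the two conjugating insertions you name, with the pattern $f\bar p\bar ep$ obtained by first inserting $f\bar f$ via (1) and then applying the other form of (3), so the apparent asymmetry between the two forms is harmless. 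Comparing the two approaches: the paper's intended argument is shorter but outsources the real content to Fenn--Rourke's description of the smallest congruence, whereas yours proves the proposition from first principles at the cost of the free-group bookkeeping you anticipate; if you write it up, do state explicitly that the congruence condition must be verified for both operations (the exponent $s\bar u\bar xu$ as well as $s\bar uxu$), since that is where the second form of (3) is genuinely needed.
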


We shall refer to the substitutions in Proposition~\ref{submoves} as {\bf substitution moves}. The proof of the proposition requires showing that the congruence defined by the substitution moves is  the same as the congruence defined by consequences of the relations described in Fenn and Rourke. We leave the details to the interested reader.

\begin{rem} \label{congruence} Notice that since the word $w$ is arbitrary in each of the substitution moves in Proposition~\ref{submoves} it follows that if $x^s \sim_R y^t$, then $x^{sw} \sim_R y^{tw}$ for any $w \in F(S)$.
\end{rem}

As is customary with group presentations, we shall adopt the notation $a^u=b$ to represent a relation $(a^u,b) \in R$ and $x^s=y^t$ to denote $x^s \sim_R y^t$ in the rack $\langle S \mid R \rangle$.  Notice that if all relations $a^a=a$ for $a \in S$ are included in $R$, then $\langle S \, | \, R \, \rangle$ is a quandle. Moreover, for a fixed $n$, if $R$ additionally includes all relations $a^{b^n}=a$ for all distinct $a,b \in S$, then $\langle S \, | \, R \, \rangle$ is an $n$-quandle. To see that this is the case, first notice that if $a^{b^n}=a$, then it follows by substitution move (3) that $x^{\bar b^nab^n \bar a}=x$ for all $x$. Thus, we obtain $y^{a b^n}=y^{b^n a}$ for all $y$ by considering $x = y^{b^n}$. Similarly, $y^{\bar a b^n}=y^{b^n \bar a}$. Since this is true for all generators, it follows by induction that that $y^{w b^n}=y^{b^n w}$ for all words $w \in F(S)$. Now  consider arbitrary elements $x=a^u$ and $y=b^v$. Since the relation $a^{b^n}=a$ has been added and, in the case $a=b$, $a^{a^n}=a$ since $\langle S \, | \, R \rangle$ is a quandle, we have
$$x^{y^n} = a^{u (\bar v b v)^n}= a^{u \bar v b^n v} = a^{b^n u \bar v v} = a^u =x.$$

\section{The rack enumeration process}

In this section, we introduce the notion of a enumeration table and identify important properties of these tables that will remain unchanged during the enumeration process.  Let $\langle S \, | \, R \, \rangle$ be a rack where $S=\{x_1, \dots ,x_g\}$ and $R$ is a set of relations $x_{i_k}^{u_k}=x_{j_k}$ with $u_k$ a reduced word in $F(S)$ for $1 \leq k \leq r$ and $1 \le i_k,j_k \le g$. Let $\bar S = \{\bar x_1, \dots, \bar x_g\}$. Following Winker, we call the relations in $R$ {\bf primary relations}. Notice that for each primary relation $x_{i_k}^{u_k}=x_{j_k}$ and for any $x \in \langle S \mid R \rangle$ we have, by substitution move (3), that 
$$x^{\bar{u}_kx_{i_k}u_k\bar{x}_{j_k}}=x.$$

\noindent These relations are called {\bf secondary relations} by Winker. The word $\bar{u}_kx_{i_k}u_k\bar{x}_{j_k}$ may not be reduced, in which case, we will use its reduced form in the procedure. We denote the set of reduced secondary relations by $R_2$.

\begin{defn} An {\bf enumeration table $\mathcal{E}$} for a rack $\mathcal{R}=\langle S \, | \,R \, \rangle$ is a 4-tuple $(\omega,A,\tau,\rho)$ where $\omega$ is the number of rows in the table, $A$ is a partial function from $\{1,2,...,\omega\} \times (S \cup \bar{S})$ to $\{1,2,...,\omega\}$, $\tau: \{1,2,...,\omega\} \rightarrow \mathcal{R}$ is a function, and $\rho:\{1,2,...,\omega\} \rightarrow \{1,2,...,\omega\}$ is a function with the property that $\rho(i) \leq i$ for all $1 \leq i \leq \omega$.
\end{defn}

We will denote $A(i,y)$ by $i^y$, and so $i^y$ may  or may not be defined since $A$ is a partial function. Define the {\bf live elements} of $\mathcal E$ to be the set $\Omega=\{i  \mid 1 \le i \le \omega \mbox{ and } \rho(i)=i \}$ and call $\mathcal E$ {\bf complete} if for every $i \in \Omega$ and for every $y \in S \cup \bar S$, we have that $i^x$ is defined.

If $\mathcal R$ is a finite rack, we describe a process that produces a sequence of tables $\mathcal{E}_0, \mathcal{E}_1,...,\mathcal{E}_f$ so that $\mathcal{E}_f$ is complete,  $\Omega$ is a rack with operations provided by $\mathcal E_f$, and $\tau : \Omega \rightarrow \mathcal R$ is an isomorphism.  In our description of the process we will represent a enumeration table by a rectangular array whose rows are numbered $1$ through $\omega$ and whose columns are labelled by the elements of $S \cup \bar S$, $\tau$, and $\rho$. The entry in row $i$ and column $y \in S \cup \bar S$ is $i^y$ if it is defined and empty otherwise. The last two columns give values of $\tau$ and $\rho$, respectively, for the row label $i$.  We begin with an example that illustrates the process before giving the details of the algorithms involved.

Consider the rack with presentation 
$$\mathcal R = \langle a,b \mid a^{ba}=b, b^{ba}=a, a^{bb}=a, b^{aa}=b \rangle.$$
We initialize the enumeration table $\mathcal E$ by letting $1$ represent the element $a$ and $2$ represent $b$.  That is, we define $\tau(1)=a$ and $\tau(2)=b$ and we set $\rho(1)=1$ and $\rho(2)=2$. We next find the set $R_2$ of secondary relations. For each secondary relation $x^w=x$ we record the reduced word $w$.
$$R_2=\{ \bar a \bar b a b a \bar b, \bar a b, \bar b \bar b a b b \bar a, \bar a \bar a b a a \bar b \}.$$
\begin{center}
\begin{small}
\begin{tabular}{ c|cccccc } 
$\mathcal E$ & $a$ & $b$ & $\bar{a}$ & $\bar{b}$ & $\tau$ & $\rho$\\ \hline
1 &  &  &  & & $a$ & 1\\ 
2 &  &  &  &  & $b$ & 2
\end{tabular}
\end{small}
\end{center}
\medskip
The next step is to encode information from the primary relations.  Consider the first relation $a^{ba}=b$.  Since $\tau(1)=a$ and $\tau(2)=b$, we would like our table to satisfy $1^{ba}=2$.  However, $1^b$ is not defined in $\mathcal E$ so we {\em define} a new element $3=1^b$ and extend the map $\tau$ so that $\tau(3) = \tau(1)^b = a^b$. Since $1^b=3$ we also add the {\em inverse entry} $3^{\bar b}=1$. We indicate where a definition is made by underlining the defined entry in the table. 

Notice that $1^{ba}=2$ and $1^b=3$ imply that $3^a =2$. This is called a {\em deduction} and we also encode it, and its inverse entry $2^{\bar a}=3$,  in our table. This is called {\em scanning} the first primary relation and the process is illustrated by a {\em helper table} shown to the right of the enumeration table below. Parentheses in the helper table indicate where {\em forward} and {\em backward scanning} end. The deduction $3^a=2$ occurs where open parentheses meet.  The corresponding entries added to $\mathcal E$ are enclosed with open parentheses. 
\medskip
\begin{center}
\begin{small}
\begin{tabular}{c|cccccc} 
$\mathcal E$ & $a$ & $b$ & $\bar{a}$ & $\bar{b}$ & $\tau$ & $\rho$\\ \hline
1 &  & \underline 3 &  & & $a$ & 1\\ 
2 &  &  & (3) &  & $b$ & 2 \\
3 & (2) &  &  & \underline 1 & $a^b$ & 3
\end{tabular}
\hspace{.25in}
\begin{tabular}{ccc} 
 & $b$ & $a$ \\ \hline
1 & \underline 3) &   (2 
\end{tabular}
\end{small}
\end{center}
\medskip
Scanning the remaining primary relations gives another definition and three additional deductions. Again we mark the modified entries in the table to indicate whether they came from a definition or a deduction and include the helper tables for the relations.
\medskip
\begin{center}
\begin{small}
\begin{tabular}{c|cccccc} 
$\mathcal E$ & $a$ & $b$ & $\bar{a}$ & $\bar{b}$ & $\tau$ & $\rho$\\ \hline
1 &  & 3 & (4) & (3) & $a$ & 1\\ 
2 & (3) & \underline 4 & 3 &  & $b$ & 2 \\
3 & 2 & (1) & (2)  & 1 & $a^b$ & 3 \\
4 & (1) & & & \underline 2 & $b^b$ & 4
\end{tabular}
\hspace{.25in}
\begin{tabular}{ccc} 
 & $b$ & $a$ \\ \hline
2 &  \underline 4) &  (1 \\
\\
  & $b$ & $b$ \\ \hline
1 &  3) &  (1 \\
\\
  & $a$ & $a$ \\ \hline
2) &  (3 &  2 
\end{tabular}
\end{small}
\end{center}
\medskip
The table $\mathcal E$ above represents the conclusion of a definite loop in the rack enumeration process that scans all primary relations. The next loop in the process scans each secondary relation for each live row. Since scanning may introduce new live rows, this loop is indefinite.

Consider scanning the first secondary relation $\bar a \bar b a b a \bar b$ for live row $1$. A helper table for this scan is shown below.  Notice that scanning forward from $1$ we have  $1^{\bar a \bar b a b}=1$ is defined but we cannot scan forward further because $1^a$ is not defined. So we begin scanning backwards from $1$.  In doing so we have that $1^{b \bar a}=2$ is defined and  we have arrived at a {\em coincidence} where two different values, $1$ and $2$, appear in the same location in the helper table (which we denote by $[1_2]$). 
\medskip
\begin{center}
\begin{small}
\begin{tabular}{ccccccc} 
 & $\bar a$ & $\bar b$ & $a$ & $b$ & $a$ & $\bar b$ \\ \hline
1 & 4 & 2 & 3 & $[1_2]$ & 3 & 1
\end{tabular}
\end{small}
\end{center}
\medskip
To resolve this coincidence, we eliminate the larger index $2$ and merge any data from row $2$ into row $1$ of the table. We do this by first changing the value of $\rho(2)$ to be 1, which indicates that 2 is a dead row and that all occurrences of 2 will eventually be replaced. Then for each $x \in \{a ,b, \bar a, \bar b \}$ we do one of three things. If $2^x$ is undefined, we proceed to the next value for $x$. If $2^x=i$ and $1^x$ is undefined, then we remove $2^x=i$ and $i^{\bar x}=2$ from $\mathcal E$  and add $1^x=i$ and $i^{\bar x}=1$ to $\mathcal E$. Notice this situation occurs for $x=a$.  Otherwise, if $2^x=i$ and $1^x=j$ then we also remove $2^x=i$ and $i^{\bar x}=2$ but, instead of adding new entries, we queue up a new coincidence between $i$ and $j$.  Notice this situation occurs for $x=b$ and $x=\bar a$ and, in both cases, the new coincidence is $[4_3]$. After resolving the coincidence $[4_3]$ in the same manner, no new coincidences appear and the resulting table is shown below.  The entries in the table changed by the coincidences are marked by closed parentheses and rows $2$ and $4$ are now dead rows.
\medskip
\begin{center}
\begin{small}
\begin{tabular}{c|cccccc} 
$\mathcal E$ & $a$ & $b$ & $\bar{a}$ & $\bar{b}$ & $\tau$ & $\rho$\\ \hline
1 & [3] & 3 & [3] & 3 & $a$ & 1\\ 
2 & $\not \! 3$ & $\not \! 4$ & $\not \! 3$ &  & $b$ & 1 \\
3 & [1] & 1 & [1]  & 1 & $a^b$ & 3 \\
4 & $\not \! 1$ & & & $\not \! 2$ & $b^b$ & 3
\end{tabular}
\end{small}
\end{center}
\medskip
At this stage the enumeration table is complete but the process is not. Continuing to scan the secondary relations will never lead to a new definition or deduction, however, there could be additional coincidences. The reader can verify that all remaining scans {\em complete correctly}, that is, forward scanning reaches the end of the relation without any definitions, deductions, or coincidences needed.  It now follows, as we show later, that the rack is of order 2 with
$\mathcal R = \{ a, a^b\}$.
Moreover, a multiplication table for the rack can now be derived from the complete enumeration table using the rack axioms.

In Algorithm~\ref{enumeration}, we present pseudocode for the rack enumeration process described in the example. The pseudocode contains several  subroutines that will be defined subsequently. Since the process contains an indefinite loop, a run limit is used to guarantee that the process terminates. We say that the {\sc Enumerate} process {\bf completes} when it returns a complete table in line~\ref{complete}. Specifically note that, if the process completes, then all secondary relations have been scanned for all live rows.

\begin{algorithm}[H]
\caption{The rack enumeration process}
\begin{algorithmic}[1]
\Procedure{Enumerate}{$S,R,M$}
\State Input: generators $S$, primary relations $R$, run limit $M$
\State $(R_2, {\mathcal T}):= \mbox{\sc Init}(S, R)$ \Comment{derive $R_2$, initialize table}
\For {$x_{i}^{u}=x_{j} \in R$} \Comment{scan primary relations}
\State {\sc Scan}$(\sim \! \mathcal T,\mbox{\sc Rep}(i), u ,\mbox{\sc Rep}(j))$ 
\EndFor
\State $i:=1$
\While {$i \le \mbox{max}( \Omega)$ and $i \le M$}
\For {$w \in R_2$} \Comment{scan secondary relations}
\If {$i \in \Omega$}
\State {\sc Scan}$(\sim \! \mathcal T,i,w,i)$ 
\Else 
\State break \Comment{$i$ is dead, stop scanning}
\EndIf
\EndFor
\If {$i \in \Omega$}
\For {$y \in S \cup \bar S$ and $i^y$ undefined}
\State {\sc Define}$(\sim \! \mathcal T, i,y)$\label{ward} \Comment{fill undefined entries in row $i$}
\EndFor
\EndIf
\State $i:=i+1$
\EndWhile
\If {$i > \mbox{max}( \Omega)$}
\State \Return $\mathcal E$ \Comment{process completes}\label{complete}
\Else 
\State \Return run limit exceeded
\EndIf
\EndProcedure
\end{algorithmic}
\label{enumeration}
\end{algorithm}

Before we describe the subroutines called by {\sc Enumerate}, we list five properties which we will show to be true after the enumeration table is initialized and which remain true after each step of the procedure. These properties will then allow us  to produce the rack isomorphism $\tau:\Omega \rightarrow \mathcal R$ when the process completes.  First, we need some additional definitions. Let $w=y_1y_2...y_t \in F(S)$  and let $j \in \{1,2,\dots, \omega\}$.  We say $j^w$ is {\bf defined and equal} to $k$ if $j_0=j$ and for $1 \leq i \leq t$ we have $j_i=j_{i-1}^{y_i}$ is defined and $j_t=k$.
As seen in the example, the function $\rho$ will be used to record when coincidences occur. Let $\mbox{orbit}(i) = \{ \rho^t(i) \mid t \ge 0\}$ where $\rho^t$ is $\rho$ composed with itself $t$ times. Define the {\bf least representative} of $i$ by $\mbox{\sc Rep}(i)= \min(\mbox{orbit}(i))$. \\

\noindent{\bf Property 1.} $1 \in \Omega$ and $\tau(i)=x_i$ for all $1 \le i \le g$.\\
\vskip -.1 in
\noindent{\bf Property 2.} If $i, j \in \{1,2,\dots,\omega\}$ and $y \in S \cup \bar{S}$, then $i^y=j$ if and only if $j^{\bar{y}}=i$.\\ 
\vskip -.1 in
\noindent{\bf Property 3.} If $i, j \in \{1,2,\dots,\omega\}$, $y \in S \cup \bar{S}$, and $i^y=j$, then $\tau(i)^y=\tau(j)$ in $\mathcal{R}$.\\
\vskip -.1 in
\noindent{\bf Property 4.} If $j \in \Omega$, then there exists $i \in \Omega$, $1\le i \le g$,  and $w \in F(S)$ such that $j=i^w$.\\
\vskip -.1 in
\noindent{\bf Property 5.} If $i \in \{1,2,\dots,\omega\}$, then $\tau(i) = \tau(\mbox{\sc Rep}(i))$ in $\mathcal R$.\\

Notice that the single element $y \in S \cup \bar{S}$ in both Properties 2 and 3 can be replaced by any word $w\in F(S)$. This is easily proven by inducting on the length of $w$.

Next, we introduce Algorithms~\ref{init} and \ref{define}. The first initializes the enumeration table and produces a set of reduced secondary relations. The second creates a new row in the table and two new entries.  The notation $\sim \! \mathcal T$ in the argument list of {\sc Define} (and already appearing in Algorithm~\ref{enumeration}) means that the procedure changes $\mathcal E$. We adopt this convention throughout.

\vskip 0 in
\begin{algorithm}[H]
\caption{Initializing the table}
\begin{algorithmic}[1]
\Procedure{Init}{$S,R$}
\State  Input: generators $S$, primary relations $R$
\State $\omega := g;\ A := \phi;\ R_2:=\phi$
\For {$x_i^u =x_j \in R$}\Comment{derive secondary relations $R_2$}
\State $w := \bar u x_i u \bar x_j$ (reduced)
\State $R_2:= R_2 \cup \{w\}$ 
\EndFor
\For {$1 \le i \le \omega$}
\State $\tau(i):=x_i$
\State $\rho(i):=i$
\EndFor
\State $\mathcal T := (\omega, A, \tau, \rho)$
\State \Return $(R_2, \mathcal T)$ 
\EndProcedure
\end{algorithmic}
\label{init}
\end{algorithm}

\begin{algorithm}[H]
\caption{Defining $i^y$}\begin{algorithmic}[1]
\Procedure{Define}{$\sim \! \mathcal T,i, y$}
\State  Input: $\mathcal{T}$, $i \in \Omega$, $y \in S \cup \bar{S}$
\State $\omega:=\omega+1$ \Comment{add new row to table}
\State $i^y:=\omega$; $\omega^{\bar{y}}:=i$
\State $\tau(\omega) := \tau(i)^y$
\State $\rho(\omega) :=\omega$
\EndProcedure
\end{algorithmic}
\label{define}
\end{algorithm}

It is straightforward to show the following. 

\begin{prop}
\label{Definelemma}  Properties 1--5 are true after calling {\sc Init} and they are preserved by each call to {\sc Define}. 
\end{prop}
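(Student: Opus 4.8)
The plan is to verify each of the five properties separately, first for the table produced by \textsc{Init} and then under a single application of \textsc{Define}. Since \textsc{Init} produces a table with $\omega=g$ rows, no entries of $A$ defined, $\tau(i)=x_i$, and $\rho(i)=i$ for all $i$, most properties are immediate: Property 1 holds because $\rho(1)=1$ so $1\in\Omega$, and $\tau(i)=x_i$ by construction; Properties 2 and 3 hold vacuously because $A$ is empty, so there are no pairs $i^y=j$ to check; Property 4 holds because every live $j$ with $1\le j\le g$ is itself of the form $i^w$ with $i=j$ and $w$ the empty word; and Property 5 holds because $\rho$ is the identity, so $\textsc{Rep}(i)=i$ and the claim is trivial.

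Next I would treat a call to \textsc{Define}$(\sim\!\mathcal T,i,y)$ with $i\in\Omega$ and $y\in S\cup\bar S$. This increments $\omega$ to $\omega+1$, sets $(i)^y=\omega$ and $\omega^{\bar y}=i$, puts $\tau(\omega)=\tau(i)^y$, and $\rho(\omega)=\omega$. For Property 1: the rows $1,\dots,g$ and their $\tau$ and $\rho$ values are untouched, and $1$ remains live, so Property 1 persists. For Property 2: the only new entries are $(i)^y=\omega$ and $\omega^{\bar y}=i$, and these are mutually inverse by construction; no previously existing entry is altered, so the biconditional still holds for every pair. For Property 3: the only new instance to check is $(i)^y=\omega$, and we need $\tau(i)^y=\tau(\omega)$ in $\mathcal R$, which is exactly how $\tau(\omega)$ was defined; the inverse entry $\omega^{\bar y}=i$ requires $\tau(\omega)^{\bar y}=\tau(i)$, which follows from $\tau(\omega)=\tau(i)^y$ together with rack axiom R1. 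For Property 4: the only new live element is $\omega$, and since $i\in\Omega$, Property 4 applied to the old table gives some live $k$ with $1\le k\le g$ and $w\in F(S)$ with $i=k^w$; then $\omega=k^{wy}$ exhibits $\omega$ in the required form (here I use that $k^w=i$ remains defined-and-equal after the call, since no old entries change). For Property 5: $\rho$ is unchanged on $\{1,\dots,\omega\}$ and $\rho(\omega)=\omega$ means $\textsc{Rep}(\omega)=\omega$, so the new row satisfies Property 5 trivially, and all old rows are unaffected.

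I expect the only mild subtlety — not a genuine obstacle — to be bookkeeping the claim, flagged in the paragraph after the list, that Properties 2 and 3 extend from single letters $y\in S\cup\bar S$ to arbitrary words $w\in F(S)$; this extension (proved by induction on word length) is what makes the Property 4 argument go through cleanly, since there $w$ is an arbitrary word and one needs $k^{wy}$ to be defined and equal to $\omega$. Everything else is a routine case-check exploiting the fact that \textsc{Define} modifies the table only by appending one row and two mutually inverse entries without disturbing anything already present. Since the verification is entirely mechanical once organized this way, the write-up can reasonably be compressed, consistent with the paper's assertion that the proof is straightforward.
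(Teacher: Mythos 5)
Your case-by-case verification is correct and is exactly the routine check the paper has in mind: the paper omits the proof entirely, asserting only that it is ``straightforward to show,'' and your argument (vacuous/trivial checks after \textsc{Init}; then noting that \textsc{Define} only appends one row and two mutually inverse entries, with Property~3 for the inverse entry following from R1 and Property~4 for the new row obtained by appending the letter $y$ to the word witnessing $i$) fills in precisely that verification. The only quibble is cosmetic: the word-length extension of Properties~2 and~3 is not really what Property~4 needs---the chaining $\omega=k^{wy}$ is immediate from the definition of $j^w$ being defined and equal---but this does not affect correctness.
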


By saying a call to {\sc Define} preserves the properties, we mean  that if they are true before a call to {\sc Define}, then they remain true after the call. We next define the procedure {\sc Scan} in Algorithm~\ref{scan}. It will call on {\sc Define} and the additional routines {\sc Deduction} and {\sc Coincidence}, that will be given in Algorithms~\ref{deduction} and \ref{coincidence}, respectively. 

\begin{algorithm}[H]
\caption{Scanning the relation $i^w = j$}
\begin{algorithmic}[1]
\Procedure{Scan}{$\sim \! \mathcal T, i,w,j$}
\State Input: $\mathcal{T}$, $i, j \in \Omega$, $w=y_1y_2...y_t \in F(S), reduced$
\State $f:=1;\ b:=t$; \Comment{initialize forward and backward counters}
\State $k:=i ;\ \ell:=j$; \Comment{initialize forward and backward scans}
\While {$f\le b$}
\While {$f \le b$ and $k^{y_f}$ defined}\label{scf}\Comment{scan forward}\label{scf}
\State $k:=k^{y_f};\ f:=f+1$ 
\EndWhile
\While {$f \le b$ and $\ell^{\bar{y}_b}$ defined}\Comment{scan backward}
\State $\ell := \ell^{\bar{y}_b};\ b:=b-1$ 
\EndWhile
\If {$f <b$}
\State {\sc Define}($\sim \! \mathcal T,k, y_f$)\Comment{extend forward scan}\label{sfdefine}
\ElsIf{$f=b$}
\State {\sc Deduction}($\sim \! \mathcal T,k, y_f, \ell$) \Comment{scans meet}\label{sfded}
\State break \Comment{break from while loop}
\ElsIf{$k \neq \ell$}\Comment{$b<f$}
\State {\sc Coincidence}($\sim \! \mathcal T, k,\ell$)\Comment{scans overlap incorrectly}\label{sfcoin}
\Else
\State break\Comment{scan completes correctly}
\EndIf
\EndWhile
\EndProcedure
\end{algorithmic}
\label{scan}
\end{algorithm}

The {\sc Scan} procedure  scans forward as far as possible and then scans backward as far as possible. After doing so, if there is a gap, then a definition is made and the cycle is repeated until the scans meet or overlap. This leads to a {\sc Deduction} or {\sc Coincidence}, respectively.  Furthermore, it is not difficult to prove that because the word $w$ is reduced, if {\sc Define} is called, then the procedure ends with a call to {\sc Deduction}.

\begin{algorithm}[H]
\caption{Making the deduction $i^y = j$}
\begin{algorithmic}[1]
\Procedure{Deduction}{$\sim \! \mathcal T, i, y ,j$}
\State Input: $\mathcal{T}$, $i,j \in \Omega$, $y \in S \cup \bar S$
\State $i^y := j$; $j^{\bar y} := i$ \label{dedadd}
\EndProcedure
\end{algorithmic}
\label{deduction}
\end{algorithm}

In order to see that a call to {\sc Scan} preserves Properties~1--5, it suffices to show that each call to the subroutines {\sc Deduction} and {\sc Coincidence} preserves the properties.

\begin{prop}
\label{Deductionlemma} Properties 1--5 are preserved by each call to {\sc Deduction}.
\end{prop}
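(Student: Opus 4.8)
The plan rests on the observation that a call to {\sc Deduction} changes the table only by adding the two mutually inverse entries $k^{y_f} := \ell$ and $\ell^{\bar y_f} := k$ (in the notation of {\sc Scan}, line~\ref{sfded}, from which {\sc Deduction} is the only place it is called), leaving $\omega$, $\tau$, and $\rho$ — and hence $\Omega$ and $\mbox{\sc Rep}$ — unchanged. Properties 1 and 5 are therefore immediate, since each refers only to $\tau$, $\rho$, and membership in $\Omega$. Property 4 is also immediate: since {\sc Deduction} only fills previously undefined slots and never alters an existing entry, it can only turn more expressions ``$j = i^w$'' from undefined into defined-and-equal, never the reverse, so whatever generator–word witness $j = i^w$ was available for a live row $j$ before the call is still available afterward.

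For Property 2 I would use that {\sc Deduction} is invoked from {\sc Scan} precisely when the forward and backward counters satisfy $f = b$, and that at that moment the forward scan has stalled and the backward scan has stalled; hence $k^{y_f}$ and $\ell^{\bar y_f} = \ell^{\bar y_b}$ are both undefined before the call. The two added entries are therefore not overwrites, they are mutually inverse, and every pre-existing entry is left alone; a short check shows that the only way a pre-existing entry could be put out of balance would be if $k^{y_f}$ or $\ell^{\bar y_f}$ had already been defined, which has just been excluded. So the equivalence ``$i^y = j \iff j^{\bar y} = i$'' is maintained.

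The heart of the matter is Property 3 for the newly added entries, i.e. the claim that $\tau(k)^{y_f} = \tau(\ell)$ in $\mathcal R$ (the companion statement $\tau(\ell)^{\bar y_f} = \tau(k)$ then follows from R1). I would prove this in three moves. First, with $i_0$, $j_0$, and $w = y_1\cdots y_t$ denoting the first argument, last argument, and word of the enclosing {\sc Scan} call, establish that $\tau(i_0)^w = \tau(j_0)$ in $\mathcal R$: if {\sc Scan} was called from the primary-relation loop on a relation $x_a^u = x_b$ — so $i_0 = \mbox{\sc Rep}(a)$, $j_0 = \mbox{\sc Rep}(b)$, $w = u$ — this follows by combining Properties 1 and 5 (which give $\tau(i_0) = x_a$ and $\tau(j_0) = x_b$ in $\mathcal R$) with the relation itself; if {\sc Scan} was called from the secondary-relation loop — so $i_0 = j_0$ and $w \in R_2$ — it follows from the fact noted earlier in this section that $x^w = x$ in $\mathcal R$ for every $x$ and every secondary relation word $w$ (using that the rack action $x \mapsto x^w$ depends only on the class of $w$ in $F(S)$). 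Second, read off the state of the scan at the point {\sc Deduction} is invoked: the forward scan has made $k = i_0^{y_1\cdots y_{f-1}}$ defined and equal and the backward scan has made $\ell = j_0^{\bar y_t\cdots\bar y_{f+1}}$ defined and equal, so the word-version of Property 3 gives $\tau(k) = \tau(i_0)^{y_1\cdots y_{f-1}}$ and $\tau(\ell) = \tau(j_0)^{\bar y_t\cdots\bar y_{f+1}}$ in $\mathcal R$. Third, combine: from $\tau(i_0)^w = \tau(j_0)$ we get $\tau(k)^{y_f y_{f+1}\cdots y_t} = \tau(j_0)$, while the formula for $\tau(\ell)$ gives $\tau(\ell)^{y_{f+1}\cdots y_t} = \tau(j_0)$ after cancelling with R1; comparing, $\bigl(\tau(k)^{y_f}\bigr)^{y_{f+1}\cdots y_t} = \tau(\ell)^{y_{f+1}\cdots y_t}$, and right cancellation (R1 applied repeatedly from the right) yields $\tau(k)^{y_f} = \tau(\ell)$. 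Every other entry of $A$ being unchanged, Property 3 holds throughout the new table.

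The step I expect to be the main obstacle is the third one, specifically the index bookkeeping: making precise that in the enclosing {\sc Scan} the forward scan has consumed the prefix $y_1\cdots y_{f-1}$ and the backward scan the suffix $\bar y_t\cdots\bar y_{f+1}$, with exactly the single letter $y_f$ left over in the gap, and then feeding the correct words into the word-version of Property 3 and into the rack cancellation law. A secondary point needing care is the dichotomy in the first move, since ``$\tau(i_0)^w = \tau(j_0)$ in $\mathcal R$'' genuinely has two different justifications according to whether {\sc Scan} was called on a primary or a secondary relation.
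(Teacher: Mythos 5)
Your proposal is correct and takes essentially the same approach as the paper: Properties 1, 2, 4, 5 follow at once because {\sc Deduction} only adds a mutually inverse pair of previously undefined entries, and Property 3 is verified for the two new entries via the word-version of Property 3 together with the same primary/secondary case split (using Properties 1 and 5 and the relation for primary scans, and $x^w=x$ for secondary words). The only difference is organizational: you isolate $\tau(i_0)^w=\tau(j_0)$ and finish with right cancellation, whereas the paper carries the two cases through a direct chain of substitution moves, which is the same computation written out forwards.
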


\begin{proof} Since no new rows are added and no values of $\tau$ and $\rho$ are changed by {\sc Deduction}, Properties 1, 4, and 5 are clearly preserved.  Property~2 is preserved since {\sc Deduction} adds both $i^y=j$ and $j^{\bar y}=i$ to the table.

We now discuss Property~3. We need only consider the case where $k^y$ is undefined before the call to {\sc Deduction} and $k^y=l$ after the call. Suppose this occurred from a call to {\sc Scan}($i, y_1 y_2 \dots y_t, j$). Then, the $f=b$, $i^{y_1 \dots y_{f-1}} = k$, $k^{y_f}$ is not defined, $j^{\bar y_t \dots \bar y_{b+1}}=\ell$, and $\ell^{\bar y_b}$ is not defined. The deduction adds two new entries $k^{y_f}=\ell$ and $\ell^{\bar y_f}=k$ to the table, so we must prove $\tau(k)^{y_f}=\tau(\ell)$ and $\tau(\ell)^{\bar y_f} = \tau(k)$. 

Because  Properties~1-5 were satisfied up to this call to {\sc Deduction}, we have
that $\tau(k) = \tau(i)^{y_1 \dots y_{f-1}}$ and  $\tau(\ell) = \tau(j)^{\bar y_t \dots \bar y_{f+1}}$. Now there are two cases depending on whether the scan was applied to a primary or secondary relation. 

\medskip

\noindent Case (1). Assume $1 \leq i, j \leq g$ and $x_i^{y_1 \dots y_t}=x_j$ is a primary relation.  With the notation above, we have
\begin{align*}
\tau(k) &= \tau(i)^{y_1 \dots y_{f-1}}\\
\tau(k)^{y_f} &=\tau(i)^{y_1 \dots y_f} & & \mbox{Remark~\ref{congruence}}\\
&=x_i^{y_1\dots y_f} & & \mbox{from {\sc Init}$(S,R)$}\\
&=x_i^{y_1...y_f y_{f+1}\dots y_t \bar{y}_t \dots \bar y_{f+1}} &&\mbox{Prop.~\ref{submoves} (1)}\\
&=x_j^{\bar{y}_t...\bar{y}_{f+1}} &&\mbox{Prop.~\ref{submoves} (2)}\\
&=\tau(j)^{\bar{y}_t...\bar{y}_{f+1}} & & \mbox{from {\sc Init}$(S,R)$}\\
&=\tau(\ell).
\end{align*}

\noindent It follows from Remark~\ref{congruence} and Proposition~\ref{submoves} (1) that $\tau(\ell)^{\bar y_f} = \tau(k)$ as well.\\

\noindent Case (2). Assume $i=j$ and $y_1 \dots y_t \in R_2$.  We now have
\begin{align*}
\tau(k) &= \tau(i)^{y_1 \dots y_{f-1}}\\
\tau(k)^{y_f} &=\tau(i)^{y_1 \dots y_f} & & \mbox{Remark~\ref{congruence}}\\
&=\tau(i)^{y_1...y_f y_{f+1}\dots y_t \bar{y}_t \dots \bar y_{f+1}} &&\mbox{Prop.~\ref{submoves} (1)}\\
&=\tau(i)^{\bar{y}_t...\bar{y}_{f+1}} &&\mbox{Prop.~\ref{submoves} (3)}\\
&=\tau(\ell).
\end{align*}
\noindent As in Case (1), this implies $\tau(\ell)^{\bar y_f} = \tau(k)$ as well.
\end{proof}

Before giving the {\sc Coincidence} procedure we describe three additional routines {\sc Merge}, {\sc Rep}, and {\sc Update} which will all be used by {\sc Coincidence}.  The procedure {\sc Rep}($i$) finds the least representative of $i$ and the related procedure {\sc Update}$(i)$ changes $\mathcal E$ so that $\rho(j)= \mbox{\sc Rep}(i)$ for all $j \in \mbox{orb}(i)$. Notice that $\rho(i) \le i$ is required for the procedure {\sc Rep} to find the least representative.

\begin{algorithm}[H]
\caption{Finding the least representative of $i$}
\begin{algorithmic}[1]
\Procedure{Rep}{$i$}
\State Input: $\mathcal E$, $i \in \{1,2,\dots,\omega\}$
\State $j := i$
\While {$\rho(j) < j$}
\State $j := \rho(j)$
\EndWhile
\State \Return $j$
\EndProcedure
\end{algorithmic}
\end{algorithm}

\begin{algorithm}[H]
\caption{Setting $\rho(j) = \mbox{\sc Rep}(i)$ for all $j \in \mbox{orb}(i)$}
\begin{algorithmic}[1]
\Procedure{Update}{$\sim \! \mathcal T, i$}
\State Input: $\mathcal E$, $i \in \{1,2,\dots,\omega\}$
\State $\epsilon = \mbox{\sc Rep}(i)$; $n:=i$; $m:=\rho(n)$
\While {$m < n$}
\State $\rho(n):= \epsilon$; $n:=m$; $m:=\rho(n)$
\EndWhile
\EndProcedure
\end{algorithmic}
\end{algorithm}

The {\sc Coincidence} procedure is called when scanning forward and backward produce two distinct values $k$ and $\ell$ in the same location of the helper table. The procedure changes $\rho$ of the larger of the two values, replaces all occurrences of the larger value with its new smallest representative, and merges information from the larger value's row into the row for its smallest  representative. Sometimes the merging of rows will introduce new coincidences.  Hence, our procedure must produce a queue of coincidences that will be resolved in order. The {\sc Merge} procedure in Algorithm~\ref{merge} adds to the queue of coincidences and changes values of $\rho$ to record which elements are to be killed.  

\begin{algorithm}[H]
\caption{Adding a coincidence to the merge queue}
\begin{algorithmic}[1]
\Procedure{Merge}{$\sim \! Q,\sim \! \rho, m, n$}
\State Input: $\rho$, $Q$ a queue of coincidences, $m \equiv n$ a coincidence  
\State $\mu:=\mbox{\sc Rep}(m)$;  $\nu:=\mbox{\sc Rep}(n)$
\If {$\mu \neq \nu$}
\State append $\mbox{max}(\mu,\nu)$ to $Q$
\State $\rho(\mbox{max}(\mu,\nu)):=\mbox{min}(\mu,\nu)$ 
\EndIf
\EndProcedure
\end{algorithmic}
\label{merge}
\end{algorithm}

Notice that only live elements are added to the queue but then are immediately killed. This means that the queue is always a distinct set of dead elements.  Notice also that $\mbox{\sc Rep}(m) = \mbox{\sc Rep}(n)$ after a call to {\sc Merge}$(m,n)$. We are now prepared to define {\sc Coincidence}.

\begin{algorithm}[H]
\caption{Resolving a coincidence $m \equiv n$}
\begin{algorithmic}[1]
\Procedure{Coincidence}{$\sim \! \mathcal T, m, n$}
\State Input: $\mathcal{T}$, $m,n \in \Omega$, $m \equiv n$ a coincidence
\State $Q:=\phi$
\State {\sc Merge}($\sim \! Q, \sim \! \rho,m, n$)\Comment{queue coincidence $m \equiv n$}\label{merge1}
\State $q:=1$
\While{$q \le \mbox{length}(Q)$}\label{whileq}
\State $d := Q(q)$; $q:=q+1$\Comment{take $q^{\rm th}$ element off $Q$}
\For{$x \in S \cup \bar S$}\label{forx}
\If{$d^x=e$}\label{dxe}
\State undefine $d^x$ and $e^{\bar{x}}$\Comment{remove inverse pair}\label{coinrem}
\State $\delta:=\mbox{\sc Rep}(d)$;\  {\sc Update}$(d)$
\State $\epsilon:= \mbox{\sc Rep}(e)$; {\sc Update}$(e)$
\If{$\delta^x$ is defined}\label{inductline}
\State {\sc Merge}($\sim \!Q,\sim \! \rho, \epsilon, \delta^x$)\label{merge2}\Comment{queue new coincidence}
\ElsIf{$\epsilon^{\bar{x}}$ is defined}
\State {\sc Merge}($\sim \!Q,\sim \! \rho,\delta,\epsilon^{\bar x}$)\label{merge3}\Comment{queue new coincidence}
\Else 
\State $\delta^x := \epsilon$; $\epsilon^{\bar x}:=\delta$ \Comment{add inverse pair}\label{coinadd}
\EndIf 
\EndIf \label{endif}
\EndFor
\EndWhile
\EndProcedure
\end{algorithmic}
\label{coincidence}
\end{algorithm}

The {\sc Coincidence} procedure involves an indefinite  loop since the length of $Q$ can increase. However, since $Q$ is a finite list of distinct elements from $\{1,2,\dots,\omega\}$ and since no process in {\sc Coincidence} changes the number of rows $\omega$ of $\mathcal E$, the loop will terminate.
We need the following lemma to prove that {\sc Coincidence} preserves Properties~1--5.

\begin{lem} If $i^y=j$ where $i,j \in \Omega$ and $y \in S \cup \overline S$ before a call to {\sc Coincidence}, then $\mbox{\sc Rep}(i)^y = \mbox{\sc Rep}(j)$ after the call.
\label{repprop}
\end{lem}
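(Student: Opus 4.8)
The plan is to track the entry $i^y=j$ through the execution of a single call to \textsc{Coincidence}, using the fact that \textsc{Coincidence} only ever removes and adds inverse pairs in the table via lines~\ref{coinrem} and~\ref{coinadd}, and queues coincidences via \textsc{Merge}, while never changing $\omega$. I would first set up the key invariant to be maintained throughout the \texttt{while} loop on line~\ref{whileq}: \emph{for every pair $i,j\in\{1,\dots,\omega\}$ and $y\in S\cup\overline S$ such that $i^y=j$ held in $\mathcal E$ at the moment \textsc{Coincidence} was called, we have $\textsc{Rep}(i)^y=\textsc{Rep}(j)$ in the current table, or else the coincidence $i\equiv j$ has already been queued in $Q$ in the sense that $\textsc{Rep}$ has been adjusted so that $\textsc{Rep}(i)=\textsc{Rep}(j)$.} Actually it is cleaner to phrase the invariant as: at every point in the loop, for each original entry $i^y=j$, either $\textsc{Rep}(i)^y=\textsc{Rep}(j)$ currently holds, or $\textsc{Rep}(i)$ and $\textsc{Rep}(j)$ will be identified by coincidences still on $Q$. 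Since $Q$ is empty when the loop terminates (that is the loop condition), the first alternative must hold at the end, which is exactly the conclusion of the lemma.

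The core of the argument is the induction step. I would argue that the invariant is established by the initial \textsc{Merge} on line~\ref{merge1} together with the base observation that before any modification every original entry still literally sits in the table, and since $\textsc{Rep}$ is the identity on live rows at that moment (modulo the one merge just performed), Property~2 gives what we need. Then I would show each iteration of the \texttt{for} loop on line~\ref{forx} preserves it: when we process $d^x=e$ (with $d=Q(q)$ a dead row), we undefine the pair $d^x=e$ and $e^{\bar x}=d$; but these are original entries (or entries previously re-homed), and the point is that $d$ and $e$ have just had $\textsc{Rep}(d)=\delta$, $\textsc{Rep}(e)=\epsilon$ computed. We then reattach the relationship at the representative level: if $\delta^x$ is already defined we \textsc{Merge} $\epsilon$ with $\delta^x$ (so $\textsc{Rep}(\epsilon)=\textsc{Rep}(\delta^x)$ going forward, which combined with $\textsc{Rep}(\delta)^x=\delta^x$ — no wait, $\delta^x$ need not equal $\textsc{Rep}(\delta)^x$) — here I need to be careful and also invoke Property~2 in the form ``$\delta^x$ defined iff $(\delta^x)^{\bar x}=\delta$ defined'' to see the two branches are symmetric; if neither $\delta^x$ nor $\epsilon^{\bar x}$ is defined we simply add $\delta^x:=\epsilon$, $\epsilon^{\bar x}:=\delta$, which directly gives $\textsc{Rep}(i)^y=\textsc{Rep}(j)$ for the original entry $i^y=j$ that descended to this $(d,e)$ pair. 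One also must check that entries $i^y=j$ not involving any row in $\mathrm{orb}(d)$ are untouched and that $\textsc{Update}$ calls, which only alter $\rho$ on rows in $\mathrm{orb}(d)$ and $\mathrm{orb}(e)$, do not disturb $\textsc{Rep}$ of rows outside those orbits.

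The step I expect to be the main obstacle is bookkeeping the correspondence between the \emph{original} entries $i^y=j$ and the entries actually present in the table partway through \textsc{Coincidence}: entries get moved when \textsc{Merge}/the reattachment logic fires, so ``the original entry $i^y=j$'' may currently be realized as a different literal entry $\delta^x=\epsilon$ at the representative level, or may have spawned a queued coincidence instead. I would handle this by strengthening the induction hypothesis to track, for each processed dead row $d$, exactly which original entries touched $d$, and showing that each such entry is accounted for by the case analysis in lines~\ref{dxe}--\ref{endif}. A secondary subtlety is that $\textsc{Rep}$ is being mutated during the loop, so the statement ``$\textsc{Rep}(i)^y=\textsc{Rep}(j)$'' refers to the \emph{current} $\rho$; I must confirm that once this equality is achieved for a given original entry, later iterations (which only further merge things, i.e.\ only shrink the set of distinct $\textsc{Rep}$ values) cannot break it — this follows because if $\textsc{Rep}(i)=\textsc{Rep}(j)$-style merges happen they are consistent with, not contradictory to, an already-established $\textsc{Rep}(i)^y=\textsc{Rep}(j)$, using that the table entry $\textsc{Rep}(i)^y=\textsc{Rep}(j)$ is itself then an ``original-style'' entry subject to the same invariant. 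With the invariant in place, termination of the loop (already noted in the text, since $Q$ is a finite set of distinct dead rows and $\omega$ is fixed) plus emptiness of $Q$ at exit yields the lemma.
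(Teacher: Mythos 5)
Your overall plan (maintain a loop invariant through the \texttt{while} loop of {\sc Coincidence} and discharge it when every queued coincidence has been processed) could in principle replace the paper's argument, but the invariant you state is not the right one, and the flaw is not cosmetic. Your fallback clause says that if $\mbox{\sc Rep}(i)^y=\mbox{\sc Rep}(j)$ fails currently, then ``the coincidence $i\equiv j$ has been queued,'' i.e.\ $\mbox{\sc Rep}(i)$ and $\mbox{\sc Rep}(j)$ will be identified. But the entry $i^y=j$ does not assert that $i$ and $j$ are equal, and when a pair $d^x=e$ descended from it is removed at line~\ref{coinrem}, what gets queued is a coincidence between $\epsilon=\mbox{\sc Rep}(e)$ and the \emph{third} row $\delta^x$ (line~\ref{merge2}), or between $\delta=\mbox{\sc Rep}(d)$ and $\epsilon^{\bar x}$ (line~\ref{merge3}) --- never between the two ends of the entry; and identifying $\mbox{\sc Rep}(i)$ with $\mbox{\sc Rep}(j)$ would not produce $\mbox{\sc Rep}(i)^y=\mbox{\sc Rep}(j)$ in any case. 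The invariant actually needed is of the shape ``there is a current table entry $p^y=q$ with $\mbox{\sc Rep}(p)=\mbox{\sc Rep}(i)$ and $\mbox{\sc Rep}(q)=\mbox{\sc Rep}(j)$,'' together with the observation that at termination every remaining entry involves only live rows, so that $p=\mbox{\sc Rep}(p)$ and $q=\mbox{\sc Rep}(q)$. (Also, $Q$ is never emptied; the counter $q$ simply passes its end, so ``$Q$ is empty at exit'' should read ``every queued coincidence has been processed.'')

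More importantly, the step you yourself flag as ``the main obstacle'' --- that the representative-level entry carrying $i^y=j$ may itself later be removed when its rows die and are dequeued --- is exactly the content of the lemma, and ``strengthen the induction hypothesis to track which original entries touched $d$'' does not resolve it: you need a reason why this re-homing cannot go on forever or lose the entry. The paper supplies that reason: each processing step replaces the pair $i_t^y=j_t$ by a pair $i_{t+1}^y=j_{t+1}$ with the same representatives and with the property that neither new row occurs in the final queue $Q'$ \emph{before} the first occurrence of $i_t$ or $j_t$; since $Q'$ is finite, this descent terminates at a pair lying outside $Q'$, whose entry therefore survives the entire call with both rows live, giving $\mbox{\sc Rep}(i)^y=\mbox{\sc Rep}(i_\ell)^y=i_\ell^y=j_\ell=\mbox{\sc Rep}(j)$. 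Without this (or some equivalent well-founded measure) your induction has no persistence/termination argument, so as written the proposal has a genuine gap. Incidentally, your parenthetical worry that ``$\delta^x$ need not equal $\mbox{\sc Rep}(\delta)^x$'' is moot at the moment $\delta:=\mbox{\sc Rep}(d)$ is computed, since $\delta$ is live right then; the real issue is that $\delta$ or the row $f=\delta^x$ may die later, which is again the persistence problem just described.
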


{\bf Proof.}  Notice that Property~2 remains true after a call to {\sc Coincidence} because entries in $\mathcal E$ are only removed or added in inverse pairs by the procedure. The {\sc Coincidence} routine incrementally builds a queue of distinct elements from $\{1,2,\dots, \omega\}$ that are all dead and have been removed from $\mathcal E$ by the time the procedure has completed. Let $Q'$ denote the final queue created by {\sc Coincidence}. The initial call to {\sc Merge} in line~4 initializes the queue by adding either $m$ or $n$ to it.

Assume $i^y=j$ before a call to {\sc Coincidence}.  If $i,j \not\in Q'$, then $i^y=j$ is not removed from $\mathcal E$ and after the call we have $i=\mbox{\sc Rep}(i)$ and $j=\mbox{\sc Rep}(j)$. Therefore, $\mbox{\sc Rep}(i)^y=\mbox{\sc Rep}(j)$ after the call. So assume then that $i$ or $j$ is in $Q'$. We will show that after executing lines~\ref{dxe}--\ref{endif}, there exists $p$ and $q$ such that $p^y=q$, $q^{\bar y}=p$, $\mbox{\sc Rep}(p) = \mbox{\sc Rep}(i)$, and $\mbox{\sc Rep}(q)=\mbox{\sc Rep}(j)$.

Assume first that $i^y=j$,  $i \in Q'$, and if $j \in Q'$ then $j$ appears after $i$ in the queue. We leave the other case to the reader. Since $j$ does not appear before $i$ in $Q'$, there is a point in the execution of the procedure where we reach line~\ref{dxe} with $d=i$, $x=y$, and $e=j$.  Starting at line~\ref{coinrem}, first $i^y=j$ and $j^{\bar y}=i$ are removed from the table and then
$\delta = \mbox{\sc Rep}(i)$ and $\epsilon = \mbox{\sc Rep}(j)$ are defined. There are three cases to consider.

\begin{enumerate}
\item If $\delta^y = f$, then $f^{\bar y}=\delta$ (since Property~2 is satisfied) and a call to $\mbox{\sc Merge}(\epsilon, f)$ is made.  After this call we have $\mbox{\sc Rep}(f) = \epsilon =\mbox{\sc Rep}(j)$ and $\mbox{\sc Rep}(i)=\delta$. If we now define $p = \delta$ and $q= f$, then $p^y=q$, $q^{\bar y}=p$, $\mbox{\sc Rep}(p) = \mbox{\sc Rep}(i)$, and $\mbox{\sc Rep}(q)=\mbox{\sc Rep}(j)$.  
\item If $\delta^y$ is undefined but $\epsilon^{\bar y} =f$, then $f^y=\epsilon$ and a call to $\mbox{\sc Merge}(\delta, f)$ is made.  Similar to above, we have $\mbox{\sc Rep}(f)=\delta=\mbox{\sc Rep}(i)$ and $\epsilon=\mbox{\sc Rep}(j)$ after the merge. In this case, define $p=f$ and $q = \epsilon$ and the result is true.
\item If $\delta^y$ and $\epsilon^{\bar y}$ are both undefined, then we add the entries $\delta^y=\epsilon$ and $\epsilon^{\bar y}=\delta$ to $\mathcal E$. Since no values of $\rho$ are changed in this case, we still have that
$\mbox{\sc Rep}(i)=\delta$ and $\mbox{\sc Rep}(j)=\epsilon$. In this case, define $p=\delta$ and $q=\epsilon$ and the result is true.
\end{enumerate}
Notice that in every case, neither $p$ nor $q$ can appear before $i$ in $Q'$.

We are now prepared to prove that if $i^y=j$ before a call to {\sc Coincidence}, then $\mbox{\sc Rep}(i)^y = \mbox{\sc Rep}(j)$ after the call.  Set $i_0=i$ and $j_0=j$. If $i_0,j_0 \not\in Q'$, then we are done. Otherwise, as shown above, there exists $i_1,j_1$ such that $i_1^y=j_1$, $\mbox{\sc Rep}(i_{0})=\mbox{\sc Rep}(i_1)$, and $\mbox{\sc Rep}(j_{0})=\mbox{\sc Rep}(j_1)$. If $i_1,j_1 \not\in Q'$, then we are done. Otherwise, note that the first occurrence of either $i_0$ or $j_0$ in $Q'$ must precede the first occurrence of $i_1$ or $j_1$ by our remark above. Since $Q'$ is finite, this implies that the process must terminate with a last equation $i_{\ell}^y= j_{\ell}$ where $i_\ell,j_\ell \not\in Q'$. Therefore,
$$\mbox{\sc Rep}(i)^y = \mbox{\sc Rep}(i_\ell)^y=i_\ell^y = j_\ell=\mbox{\sc Rep}(j_\ell)=\mbox{\sc Rep}(j).$$
 \hfill $\Box$
  
We are now prepared to prove that {\sc Coincidence} preserves Properties 1--5.

\begin{prop}
\label{Coincidencelemma}
Properties 1--5 are preserved by each call to {\sc Coincidence}.
\end{prop}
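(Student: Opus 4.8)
The plan is to show that each of the five properties survives a call to \textsc{Coincidence}, handling them in roughly the order of difficulty. Properties~1 and~4 are the cheapest: \textsc{Coincidence} never creates a new row (it only changes $\rho$ values, removes inverse pairs, and adds inverse pairs among existing rows), so $\omega$ is unchanged, $1$ remains live unless it is killed (and it cannot be killed because \textsc{Merge} only ever kills the larger of two representatives and $1$ is minimal), $\tau$ is untouched on $\{1,\dots,g\}$, and every live $j$ still traces back via some word to a live generator because the witnessing path from Property~4 can be repaired one edge at a time using Lemma~\ref{repprop}. Property~2 was already observed, inside the proof of Lemma~\ref{repprop}, to hold because entries are added and removed only in inverse pairs; I would simply cite that.

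The substantive work is Properties~3 and~5. For Property~5 I would argue that whenever \textsc{Merge} sets $\rho(\max(\mu,\nu)) := \min(\mu,\nu)$, it does so in response to a genuine coincidence $m \equiv n$, i.e.\ a situation in which a single cell of a helper table received two values; by the way \textsc{Scan} (and the recursive merges inside \textsc{Coincidence}) generate coincidences, that means $\tau(m) = \tau(n)$ already held in $\mathcal R$ before the merge. I would make this precise by an induction on the position in the queue $Q'$: the seed coincidence passed to \textsc{Coincidence} satisfies $\tau(m)=\tau(n)$ by the hypothesis under which \textsc{Scan} calls it (forward and backward scans reaching the same helper cell, combined with the already-established word version of Property~3), and each later coincidence $\epsilon \equiv \delta^x$ (or $\delta \equiv \epsilon^{\bar x}$) queued on line~\ref{merge2} (or~\ref{merge3}) arises from removing an entry $d^x=e$ and comparing it against $\mathrm{Rep}(d)^x$; since $\tau(d)=\tau(\mathrm{Rep}(d))$ and $\tau(e)=\tau(\mathrm{Rep}(e))$ by the inductive hypothesis, and $\tau(d)^x=\tau(e)$, $\tau(\delta^x)=\tau(e)$ by Property~3, so the new coincidence again equates $\tau$-values. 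Hence after \textsc{Coincidence} completes, $\rho(j)$ is changed only to indices with the same $\tau$-value, and chasing $\mathrm{Rep}$ down the orbit preserves $\tau(i)=\tau(\mathrm{Rep}(i))$.

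For Property~3 I need: after the call, if $i^y=j$ then $\tau(i)^y=\tau(j)$. Entries present after the call are of two kinds. Those that were already present and never removed are fine, since $\tau$ did not change. The entries added on line~\ref{coinadd} are $\delta^x=\epsilon$ and $\epsilon^{\bar x}=\delta$ where $\delta=\mathrm{Rep}(d)$, $\epsilon=\mathrm{Rep}(e)$, and $d^x=e$ held before removal; by the (word form of the) old Property~3, $\tau(d)^x=\tau(e)$, and by Property~5 just established, $\tau(\delta)=\tau(d)$ and $\tau(\epsilon)=\tau(e)$, so $\tau(\delta)^x=\tau(\epsilon)$, which is exactly what is needed (the inverse entry follows by axiom R1). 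So Property~3 reduces to Property~5 plus bookkeeping about which entries are live at the end.

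The main obstacle I anticipate is the interleaving of the queue: \textsc{Coincidence} does not resolve one coincidence fully before generating the next, it calls \textsc{Update} in the middle, and $\mathrm{Rep}$ values shift as the procedure runs, so "the $\tau$-value of $\mathrm{Rep}(d)$ at the moment line~\ref{merge2} executes" must be tied back to the invariant. Lemma~\ref{repprop} is the tool for this, and I would lean on it heavily, essentially re-running its queue-induction but now carrying the extra data "$\tau$ of everything in sight agrees in $\mathcal R$." The cleanest packaging is probably a single induction on the number of \textsc{Merge} calls, with inductive statement: after the $k$-th \textsc{Merge}, Property~2 holds, every pair $m\equiv n$ so far queued has $\tau(m)=\tau(n)$, and for every live entry $p^x=q$ currently in $\mathcal E$ we have $\tau(p)^x=\tau(q)$; Properties~1,~4,~5 are then read off at termination.
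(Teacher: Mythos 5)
Your proposal is correct and takes essentially the same route as the paper: Properties 1, 2, and 4 are dispatched just as in the text (with Lemma~\ref{repprop} repairing the witnessing path for Property 4), while Properties 3 and 5 are proved jointly by an induction over the {\sc Merge}/queue processing, with the seed coincidence handled via the scanned relation together with the word form of Property 3, and the three branches (lines~\ref{merge2}, \ref{merge3}, \ref{coinadd}) checked in the inductive step. This is the paper's own argument in only slightly repackaged form.
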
 

\begin{proof} Notice that $1 \in \Omega$ after a call to {\sc Coincidence} because {\sc Merge} will never change $\rho(1)$. Furthermore, none of the procedures alter $\tau$ so Property~1 remains true. As already seen in the proof of Lemma~\ref{repprop},  Property~2 remains true after a call to {\sc Coincidence}. 

It is convenient to prove Properties~3 and 5 together. Assume both properties are true before a call to {\sc Coincidence}. We first consider the initial call to {\sc Merge} on line~\ref{merge1}. The {\sc Merge} procedure does not change values of $A$ nor $\tau$ and so Property~3 is still true after line~\ref{merge1}. On the other hand, {\sc Merge} does change values of $\rho$ and so we must show Property~5 remains true after line~\ref{merge1}. There are two cases to consider depending on whether {\sc Coincidence} was called by {\sc Scan} when considering a primary or a secondary relation. First consider {\sc Scan}$(i,y_1 \dots y_t,j)$ with $1 \le i,j \le g$ and $x_i^{y_1 \dots y_t}=x_j \in R$ a primary relation. A coincidence occurs when $i^{y_1 \dots y_{f-1}}=k$ is defined, $j^{\bar y_t \dots \bar y_f}=\ell$ is defined, $k \neq \ell$, and $k, \ell \in \Omega$. Hence, $\mbox{\sc Rep}(k)=k$ and $\mbox{\sc Rep}(\ell)=\ell$ before the call to {\sc Coincidence}.   Since Property~3 is true before the call, we have that $\tau(k)=x_i^{y_1 \dots y_{f-1}}$ and $\tau(\ell)=x_j^{\bar y_t \dots \bar y_f}$. Therefore,
\begin{align*}
\tau(k) &=x_i^{y_1 \dots y_{f-1}}\\
&=x_i^{y_1 \dots y_{f-1} y_f \dots y_t \bar y_t \dots \bar y_f} &&\mbox{Prop.~\ref{submoves} (1)}\\ 
&=x_j^{\bar y_t \dots \bar y_f} &&\mbox{Prop.~\ref{submoves} (2)}\\
&=\tau(\ell).
\end{align*}
Assume that $k > \ell$ and so after the call to {\sc Merge}$(k,\ell)$ in line~\ref{merge1} we have $\mbox{\sc Rep}(k) = \mbox{\sc Rep}(\ell) = \ell$.  Therefore, $\tau(\ell) = \tau( \mbox{\sc Rep}(\ell))$ and $\tau(k) = \tau(\ell) = \tau(\mbox{\sc Rep}(k))$ so Property~5 remains true after line~\ref{merge1}. The case where $k < \ell$ is similar as is the case when {\sc Scan} is applied to a secondary relation.

We next show that the two properties are preserved by inducting on the number of times lines~\ref{merge1} and \ref{inductline} are executed. The previous argument for the call to {\sc Merge} in line~\ref{merge1} establishes the base case. So assume that both properties are true and we arrive at  line \ref{inductline} with $d \in Q$, $x \in S \cup \bar S$, $d^x=e$, $\delta = \mbox{\sc Rep}(d)$, and $\epsilon = \mbox{\sc Rep}(e)$.  Thus, at this point,  $\tau(d)^x=\tau(e)$, $\tau(d) =\tau(\delta)$, and $\tau(e)=\tau(\epsilon)$  by our inductive hypothesis. There are three cases to consider: we may call {\sc Merge} on lines \ref{merge2} or \ref{merge3}, or add two entries to the table on line 18.  In no case are values of $\tau$ changed, but {\sc Merge} may changes values of $\rho$. Thus, Property 5 will remain true if line \ref{coinadd} is executed and we must still show that it remains true if line \ref{merge2} or \ref{merge3} is executed. Similarly, Property 3 will remain true if line \ref{merge2} or \ref{merge3} is executed and we must still show that it remains true if line  \ref{coinadd} is executed.

Suppose $\delta^x=f$ and we call {\sc Merge}$(\epsilon,f)$ in line~\ref{merge2}. So, by Property~3, $\tau(\delta)^x=\tau(f)$. Suppose now that $\ell$ is arbitrary. We want to show that $\tau(\ell)=\tau(\mbox{\sc Rep}(\ell))$ after the call to  {\sc Merge}$(\epsilon,f)$. Assume that $\epsilon > \mbox{\sc Rep}(f)=\phi$ in which case {\sc Merge} will set $\rho(\epsilon)=\phi.$ If before the call to {\sc Merge}, $\mbox{\sc Rep}(\ell)\ne \epsilon$ then $\mbox{\sc Rep}(\ell)$ will be unchanged and after the call we will still have $\tau(\ell)=\tau(\mbox{\sc Rep}(\ell))$. However, if $\mbox{\sc Rep}(\ell)= \epsilon$ before the call, then $\tau(\ell) = \tau(\epsilon)$ and, after the call, we will have $\mbox{\sc Rep}(\ell)=\phi$. Using all of the above, we have
$$\tau(\ell)=\tau(\epsilon)=\tau(e)=\tau(d)^x=\tau(\delta)^x=\tau(f)=\tau(\phi)=\tau(\mbox{\sc Rep}(\ell)).$$
The case where $\epsilon < \phi$ is similar.  If $\delta^x$ is undefined and $\epsilon^{\bar x}$ is defined, then the argument is similar.

Consider now the third possibility where $\delta^x$ and $\epsilon^{\bar x}$ are both undefined. In this case, two entries $\delta^x=\epsilon$ and $\epsilon^{\bar x}=\delta$ are added to $\mathcal E$ by line~\ref{coinadd}. By the inductive hypotheses, we have 
$$\tau(\delta)^x = \tau(d)^x =\tau(e)=\tau(\epsilon).$$
Therefore, Property~3 remains true after executing line~\ref{coinadd}. 

Finally, consider Property~4. If $j \in \Omega$ after the call to {\sc Coincidence}, then before the call, $j \in \Omega$ and there exists $i \in \Omega \cap \{1,2,\dots, g\}$ and $w \in F(S)$ such that $j=i^w$. Because $j \in \Omega$ after the call we have $j = \mbox{\sc Rep}(j)$. Moreover,  from Lemma~\ref{repprop}, $j=\mbox{\sc Rep}(j) = \mbox{\sc Rep}(i)^w$ after the call. Because $\mbox{\sc Rep}(i) \le i$, this establishes Property~4.
\end{proof}

Combining the results in this section we have the following theorem.

\begin{thm} \label{Tprops} If {\sc Enumerate}$(S,R,M)$ completes, then $\mathcal E$ is complete and satisfies Properties~1--5.
\end{thm}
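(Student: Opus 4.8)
The plan is to assemble Theorem~\ref{Tprops} from the results already established, using an induction on the steps of the {\sc Enumerate} procedure. First I would observe that Proposition~\ref{Definelemma} gives the base case: after the call to {\sc Init} in line~3, Properties~1--5 hold. The inductive step is then a matter of checking that every subsequent modification of $\mathcal E$ preserves the five properties. The only routines that alter $\mathcal E$ are {\sc Define}, {\sc Deduction}, and {\sc Coincidence}, and these are exactly the routines addressed by Propositions~\ref{Definelemma}, \ref{Deductionlemma}, and \ref{Coincidencelemma}. Since {\sc Enumerate} only touches $\mathcal E$ through calls to {\sc Scan} (which in turn calls only {\sc Define}, {\sc Deduction}, and {\sc Coincidence}) and through the explicit {\sc Define} calls on line~\ref{ward}, it follows by induction on the number of such calls that Properties~1--5 hold throughout the run, and in particular at the moment the procedure returns $\mathcal E$ on line~\ref{complete}.

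Next I would address completeness of the returned table. The procedure returns $\mathcal E$ on line~\ref{complete} precisely when the \textbf{while} loop exits with $i > \max(\Omega)$. I would argue that when this happens, every live row has had all of its $S \cup \bar S$ entries filled: each time the loop body runs for a value $i \in \Omega$, after scanning all secondary relations it executes the \textbf{for} loop on lines~\ref{ward}ff., calling {\sc Define}$(\sim\!\mathcal T, i, y)$ for every $y \in S \cup \bar S$ with $i^y$ undefined, so row $i$ becomes fully defined at that moment. The subtlety is that later coincidences could, in principle, kill row $i$ or alter its entries; but if row $i$ is killed it is no longer in $\Omega$ and imposes no completeness obligation, and if $\mbox{\sc Rep}(i) = i$ still holds at termination then Lemma~\ref{repprop} guarantees that whenever $i^y$ was defined and subsequently modified by a {\sc Coincidence}, the entry $\mbox{\sc Rep}(i)^y = i^y$ remains defined (equal to $\mbox{\sc Rep}$ of the old value). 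Hence at termination $i^y$ is defined for every $i \in \Omega$ and every $y \in S \cup \bar S$, which is exactly the definition of $\mathcal E$ being complete.

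The main obstacle I anticipate is the bookkeeping around the interaction between completeness and coincidences: one must be careful that the {\sc Define} calls on line~\ref{ward}, made while processing row $i$, are not silently undone for rows $i' > i$ that were already processed, and conversely that rows processed earlier are not left with undefined entries. The cleanest way to handle this is to invoke Lemma~\ref{repprop} as above, which says that definedness of entries between live representatives is preserved by {\sc Coincidence}, together with the observation that {\sc Define} only ever \emph{adds} entries and never removes them. Once this is in hand, the conclusion is immediate: the table returned on line~\ref{complete} is complete, and since Properties~1--5 are preserved by every step of the procedure starting from {\sc Init}, it also satisfies Properties~1--5. I would keep the exposition brief, citing Propositions~\ref{Definelemma}, \ref{Deductionlemma}, \ref{Coincidencelemma} and Lemma~\ref{repprop} for the heavy lifting and spelling out only the induction on the procedure's steps and the completeness argument for line~\ref{ward}.
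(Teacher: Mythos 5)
Your proposal is correct and follows essentially the same route as the paper, which obtains Theorem~\ref{Tprops} simply by combining Propositions~\ref{Definelemma}, \ref{Deductionlemma}, and \ref{Coincidencelemma} (an induction over the calls to {\sc Define}, {\sc Deduction}, and {\sc Coincidence}). Your extra care about completeness---that line~\ref{ward} fills each live row and that Lemma~\ref{repprop} (together with the fact that entries are only removed for rows killed in {\sc Coincidence}) keeps surviving live rows filled afterwards---is exactly the detail the paper leaves implicit in its discussion of Ward's modification, so it is a welcome elaboration rather than a divergence.
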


Note that even if {\sc Enumerate}$(S,R,M)$ returns a run limit exceeded statement, then the table produced up to that point still satisfies Properties 1--5. The table may even be complete, however, the secondary relations have not been scanned for all $i \in \Omega$.

\section{Complete tables and the rack isomorphism}

In this section we establish our main results regarding the relationship between $\mathcal E$ and $\mathcal R$ when {\sc Enumerate} completes. We begin with a useful lemma.

\begin{lem}
\label{omega} If {\sc Enumerate}$(S,R,M)$ completes, then for all $i,j \in \{1,2,\dots,\omega \}$ we have $\mbox{\sc Rep}(i)=\mbox{\sc Rep}(j)$ if and only if $\tau(i) = \tau(j)$ in $\mathcal R$. 
\end{lem}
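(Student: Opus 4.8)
The goal is to show that, once {\sc Enumerate} completes, the function $\tau$ factors through the equivalence relation given by $\mbox{\sc Rep}$ and the induced map on equivalence classes is injective into $\mathcal R$. The forward direction ($\mbox{\sc Rep}(i) = \mbox{\sc Rep}(j)$ implies $\tau(i) = \tau(j)$) is immediate from Property~5: if $\mbox{\sc Rep}(i) = \mbox{\sc Rep}(j)$, then $\tau(i) = \tau(\mbox{\sc Rep}(i)) = \tau(\mbox{\sc Rep}(j)) = \tau(j)$ in $\mathcal R$. So the entire content of the lemma is the reverse implication, which is the injectivity statement, and this is where essentially all the work will go.

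For the reverse direction I would argue by analyzing a substitution-move derivation. Suppose $\tau(i) = \tau(j)$ in $\mathcal R = \langle S \mid R\rangle$. By Property~5 we may assume $i,j \in \Omega$, i.e.\ $i = \mbox{\sc Rep}(i)$ and $j = \mbox{\sc Rep}(j)$; the claim then becomes $i = j$. By Property~4, write $i = i_0^w$ and $j = j_0^v$ for generators $i_0, j_0 \in \{1,\dots,g\}$ (identified with the corresponding rows), so by Property~3, $\tau(i) = x_{i_0}^w$ and $\tau(j) = x_{j_0}^v$ in $\mathcal R$. Since $x_{i_0}^w \sim_R x_{j_0}^v$, Proposition~\ref{submoves} gives a finite chain of substitution moves (or inverses) carrying $x_{i_0}^w$ to $x_{j_0}^v$. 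The plan is to induct on the length of this chain, maintaining the invariant that at each intermediate word $x_{a}^{u}$ there is a live row $r$ with $\mbox{\sc Rep}(a^{(\text{initial segment})}) $... more precisely, that $a^u$, read as a walk in the completed table starting from the row for generator $a$, is defined and its terminus $r$ satisfies $r \in \Omega$ and $\tau(r) = x_a^u$. The crux is verifying this invariant survives each of the three substitution-move types.

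The key input is completeness together with the fact, guaranteed on completion, that \emph{every secondary relation has been scanned for every live row without producing a coincidence, deduction, or definition} — i.e.\ every such scan completes correctly. This is what makes moves of type (2) and (3) harmless: move (2) replaces $a^{uw}$ with $b^w$ when $(a^u,b)\in R$, and this corresponds to traversing (part of) the primary relation $x_a^u = x_b$, which by the primary-relation scan done at the start of {\sc Enumerate} leaves the table consistent; move (3) is governed by a secondary relation in $R_2$, and since that secondary relation scans correctly from every live row, inserting $\bar u a u \bar b$ (or $\bar u \bar a u b$) into the exponent at a live row does not change where the walk ends (one must also use Lemma~\ref{repprop} and Property~2 to push representatives through dead rows, since intermediate rows of the walk need not be live). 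Move (1), the free insertion of $v\bar v$, is immediate from Property~2, which says $r^y$ and $r^{\bar y}$ are inverse partners, so inserting $y\bar y$ is a round trip. Finally, once the chain terminates we have $\tau(i) = x_{i_0}^w$ read off the table as row $i$ and $\tau(j) = x_{j_0}^v$ read off as row $j$, and the invariant forces $\mbox{\sc Rep}(i) = \mbox{\sc Rep}(j)$, hence $i = j$ since both are live.

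The main obstacle I anticipate is bookkeeping around intermediate rows of the table-walk that are \emph{not} live: a substitution move operates at a fixed ``position'' in the word, but the corresponding row in the table may have died (its $\rho$-value decreased) during some earlier {\sc Coincidence}. The clean way to handle this is to phrase the inductive invariant entirely in terms of $\mbox{\sc Rep}$ of the terminal row and to lean on Lemma~\ref{repprop} (``$i^y = j$ implies $\mbox{\sc Rep}(i)^y = \mbox{\sc Rep}(j)$'') extended to words $w$ by the induction on $|w|$ noted after Property~5, so that the walk can always be taken to pass through least representatives. A secondary concern is that the word $w$ in Property~4 need not be reduced while the scans use reduced words; but free reduction of the exponent is exactly a sequence of type-(1) moves in reverse, which the argument already accommodates, so this is not a genuine difficulty. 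I would expect the write-up to be moderate in length, with the type-(3)/secondary-relation case being the one requiring the most care.
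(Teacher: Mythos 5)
Your proposal is correct and follows essentially the same route as the paper: the forward direction from Property~5, and the reverse direction by using Properties~3--4 to write $\tau(i),\tau(j)$ as $x_a^\alpha, x_b^\beta$, invoking Proposition~\ref{submoves} to get a chain of substitution moves, and checking move-by-move (Property~2/completeness for type (1), the primary-relation scans for type (2), the secondary-relation scans from live rows for type (3)) that the corresponding table walks from the live generator rows end at the same element, forcing $\mbox{\sc Rep}(i)=\mbox{\sc Rep}(j)$. The paper states the invariant simply as $\mbox{\sc Rep}(e)^u=\mbox{\sc Rep}(f)^v$ for each intermediate word, which sidesteps most of your bookkeeping about dead intermediate rows, but this is a presentational difference rather than a different argument.
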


\begin{proof} By Theorem~\ref{Tprops} we know that $\mathcal E$ satisfies Properties 1--5. Therefore, if $\mbox{\sc Rep}(i)=\mbox{\sc Rep}(j)$, then by Property 5 we have 
$$\tau(i) = \tau(\mbox{\sc Rep}(i))= \tau(\mbox{\sc Rep}(j))= \tau(j).$$
Conversely, assume that $\tau(i)=\tau(j)$.  Since $\mathcal E$ satisfies Properties~1--5 and {\sc Rep}$(i)$, {\sc Rep}$(j) \in \Omega$, there exists $a,b \in \Omega \cap \{1,2,\dots,g\}$ and $\alpha, \beta \in F(S)$ such that $a^\alpha = \mbox{\sc Rep}(i)$, $b^\beta = \mbox{\sc Rep}(j)$, 
$\tau(\mbox{\sc Rep}(i))=x_a^\alpha$, and $\tau(\mbox{\sc Rep}(j))=x_b^\beta$. We now have
$$x_a^{\alpha} = \tau(\mbox{\sc Rep}(i)) =\tau(i) =\tau(j) =\tau(\mbox{\sc Rep}(j))= x_b^{\beta}.$$ Therefore, there is a finite sequence of substitution moves that take $x_a^{\alpha}$ to $x_b^{\beta}$. We will show that for each substitution move, if $x_e^u$ is taken to $x_f^{v}$, then $\mbox{\sc Rep}(e)^u=\mbox{\sc Rep}(f)^{v}$ in $\mathcal E$.  Therefore, after the finite sequence of moves that takes $x_a^\alpha$ to 
$x_b^\beta$, we have 
$$\mbox{\sc Rep}(i) = a^\alpha =\mbox{\sc Rep}( a)^\alpha= \mbox{\sc Rep}( b)^\beta=b^\beta = \mbox{\sc Rep}(j).$$

\noindent Move (1): Assume move (1) takes $x_e^{uw}$ to $x_e^{uv \bar vw}$. Since $\mathcal E$ is complete and satisfies Property~2 we have that $\ell^{v \bar v}=\ell$ for all $\ell \in \Omega$. Therefore, 
$\mbox{\sc Rep}(e)^{uw}=\mbox{\sc Rep}(e)^{u v \bar v w}$.

\noindent Move (2): Assume move (2) takes $x_e^{uw}$ to $x_f^{w}$ where $x_e^u=x_f$ is a primary relation. Then {\sc Scan}$(\mbox{\sc Rep}(e),u,\mbox{\sc Rep}(f))$ was called in the {\sc Enumerate} procedure. Therefore, $\mbox{Rep}(e)^u=\mbox{\sc Rep}(f)$ in $\mathcal E$ and so $\mbox{Rep}(e)^{uw}=\mbox{Rep}(f)^{w}$.

\noindent Move (3): Assume move (3) takes $x_e^{v w}$ to $x_e^{v \bar u x_c u \bar x_d w}$ where $x_c^u=x_d$ is a primary relation. Then, {\sc Scan}$(\mbox{Rep}(e)^{v},\bar u x_c u \bar x_d, \mbox{Rep}(e)^{v})$ was called in the {\sc Enumerate} procedure because $\mbox{Rep}(e)^{v} \in \Omega$. Therefore, $\mbox{Rep}(e)^{v \bar u x_c u \bar x_d}=\mbox{Rep}(e)^{v}$ in $\mathcal E$ and so  $\mbox{Rep}(e)^{v \bar u x_c u \bar x_d w}=\mbox{Rep}(e)^{v w}$.
The other case of substitution move (3) is similar.
\end{proof}

Notice that if {\sc Enumerate}$(S,R,M)$ completes, then for all $j \in \Omega$ there exists $k \in \Omega \cap \{1,2,\dots,g\}$ and $w \in F(S)$ such that $j=k^w$.  In this case we may define two operations on $\Omega$ by
\begin{equation}\label{omegaops}
i^j = i^{\bar w x_k w} \ \ \ \mbox{and}\ \ \ i^{\bar j} = i^{\bar w \bar x_k w}.
\end{equation}

\begin{thm} \label{omegarack} If {\sc Enumerate}$(S,R,M)$ completes, then $\Omega$ with operations given by (\ref{omegaops}) is a rack and $\tau: \Omega \rightarrow \mathcal{R}$ is a rack isomorphism.
\end{thm}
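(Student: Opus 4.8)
My plan is to push everything through the map $\tau$, using Lemma~\ref{omega} to get injectivity on $\Omega$ and then transporting the rack structure of $\mathcal R$ back to $\Omega$. By Theorem~\ref{Tprops}, $\mathcal E$ is complete and satisfies Properties~1--5, and these are the only features of $\mathcal E$ I will use. As a preliminary remark, note that since $\mathcal E$ is complete, $i^y$ is defined for every $i\in\Omega$ and every $y\in S\cup\bar S$; moreover an inspection of {\sc Define}, {\sc Deduction}, and {\sc Coincidence} shows that every entry of $\mathcal E$ joins two live rows (a row is killed only by {\sc Merge}, which simultaneously queues it, and {\sc Coincidence} then removes all of that row's entries). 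Hence $i\mapsto i^y$ is a total map $\Omega\to\Omega$ for each $y\in S\cup\bar S$, so $i^w\in\Omega$ for all $i\in\Omega$ and $w\in F(S)$; in particular the right-hand sides of~(\ref{omegaops}) lie in $\Omega$.

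First I would show $\tau\colon\Omega\to\mathcal R$ is a bijection. Injectivity is immediate from Lemma~\ref{omega}: if $i,j\in\Omega$ and $\tau(i)=\tau(j)$ then $i=\mbox{\sc Rep}(i)=\mbox{\sc Rep}(j)=j$. For surjectivity, an arbitrary element of $\mathcal R$ is the class $x_p^{\alpha}$ for some $1\le p\le g$ and $\alpha\in F(S)$; put $k=\mbox{\sc Rep}(p)\in\Omega$, so $\tau(k)=\tau(p)=x_p$ by Properties~1 and~5, and then $k^{\alpha}\in\Omega$ with $\tau(k^{\alpha})=\tau(k)^{\alpha}=x_p^{\alpha}$ by the word-version of Property~3.

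The key step is to check that $\tau$ intertwines the operations~(\ref{omegaops}) with those of $\mathcal R$. Fix $i,j\in\Omega$ and, using Property~4, write $j=k^w$ with $1\le k\le g$, $k\in\Omega$, and $w\in F(S)$; write $\tau(i)=[a^u]$. Applying the word-version of Property~3 (to $i^{\bar w x_k w}$ and to $k^w=j$), unwinding the exponential notation via $[a^u]\rhd[b^v]=[a^{u\bar v b v}]$ (with $b=x_k$, $v=w$), and using $\tau(k)=x_k$, I compute
\[
\tau(i^j)=\tau\!\bigl(i^{\bar w x_k w}\bigr)=\tau(i)^{\bar w x_k w}=[a^{u\bar w x_k w}]=[a^u]\rhd[x_k^{\,w}]=\tau(i)\rhd\tau(j),
\]
and, using $[a^u]\rhd^{-1}[b^v]=[a^{u\bar v\bar b v}]$, likewise $\tau(i^{\bar j})=\tau(i)\rhd^{-1}\tau(j)$. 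In particular $\tau(i^j)$ and $\tau(i^{\bar j})$ depend only on $\tau(i)$ and $\tau(j)$, not on the chosen representation $j=k^w$; since $i^j,i^{\bar j}\in\Omega$ and $\tau$ is injective on $\Omega$, the operations~(\ref{omegaops}) are well defined.

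Finally I would transport structure along the bijection $\tau$. Applying $\tau$ to an instance of R1 or R2 for $\Omega$ and using the two displayed identities turns it into the corresponding instance of R1 or R2 in $\mathcal R$; since $\mathcal R$ is a rack and $\tau$ is injective on $\Omega$, it follows that $\Omega$ satisfies R1 and R2 and so is a rack, and the same two identities show that both $\tau$ and $\tau^{-1}$ are rack homomorphisms, i.e.\ $\tau$ is a rack isomorphism. The step that needs the most care is the well-definedness argument in the previous paragraph: it rests squarely on Lemma~\ref{omega} (for injectivity of $\tau$ on $\Omega$) and on correctly expanding the Fenn--Rourke exponential notation in the displayed computation, since a slip there would break both well-definedness and the homomorphism property.
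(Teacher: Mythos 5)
Your proof is correct, and it uses the same ingredients as the paper's (completeness, Properties~1--5, and Lemma~\ref{omega}), but the decomposition is genuinely different. The paper proves well-definedness of (\ref{omegaops}) by a separate argument---from $k^w=\ell^v$ it deduces $x_k^w=x_\ell^v$ in $\mathcal R$, applies the rack axioms of $\mathcal R$ to get $\tau(i)^{\bar w x_k w}=\tau(i)^{\bar v x_\ell v}$, and then invokes Lemma~\ref{omega}---and it verifies R1 and R2 for $\Omega$ by direct word computations such as $(i^j)^{\bar j}=i^{\bar w x_k w\bar w\,\bar x_k w}=i$, proving the homomorphism identity $\tau(i^j)=\tau(i)^{\tau(j)}$ only at the end. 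You instead front-load the identity $\tau(i^j)=\tau(i)\rhd\tau(j)$ (valid for each chosen representation $j=k^w$) and then obtain well-definedness, R1, R2, and the isomorphism statement all at once by transporting structure along the injective map $\tau$; this is a legitimate and arguably cleaner route, with everything resting on the free-rack computation $\tau(i)^{\bar w x_k w}=[a^{u\bar w x_k w}]=[a^u]\rhd[x_k^w]$, which you carry out correctly. You also make explicit a point the paper leaves implicit: that completeness, together with the way {\sc Coincidence} strips dead rows, gives $i^w\in\Omega$ for all $i\in\Omega$ and $w\in F(S)$, which is needed both for the right-hand sides of (\ref{omegaops}) and for the paper's own assertion that $k^w,\ell^v\in\Omega$; your sketch of this closure fact is sound, granted the observation that a row killed partway through a {\sc Coincidence} call is queued and has all of its entries (and, by Property~2, all entries pointing to it) removed before the call returns.
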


\begin{proof} We first show the operations are well-defined.  Assume $j=k^w=\ell^v$ with $k, \ell  \in \Omega \cap \{1,2,\dots,g\}$ and $w,v \in F(S)$. Since $\mathcal E$ is complete, $k^w$ and $\ell^v$ are both in $\Omega$ and hence, by Lemma~\ref{omega},  $\tau(k^w)=\tau(\ell^v)$. Property 3 now implies that $\tau(k)^w=\tau(\ell)^v$ and by Property 1, $x_k^w=x_\ell^v$. Hence, the rack axioms tell us that $\tau(i)^{\bar w x_k w}=\tau(i)^{\bar v x_\ell v}$ and using Property 3 again, $\tau(i^{\bar w x_k w})=\tau(i^{\bar v x_\ell v})$. Now by Lemma~\ref{omega},  $i^{\bar w x_k w}=i^{\bar v x_\ell v}$.  Therefore $i^j$ is well-defined. The proof for $i^{\bar j}$ is similar.

Suppose that $i, j \in \Omega$ and $j=k^w$. We have
$$(i^j)^{\bar j}=(i^{\bar w x_k w})^{\bar j}=i^{\bar w x_k w\bar w\,\bar x_kw}=i.$$
Similarly, $(i^{\bar j})^j=i$. Thus, the first rack axiom holds.

Let  $\ell=m^v$. Then
$$(i^j)^\ell= (i^{\bar w x_k w})^\ell=i^{\bar w x_k w \bar v x_m v}$$
and
$$\left(i^\ell \right)^{\left(j^\ell \right)} = 
\left(i^{\bar v x_m v}\right)^{\left( j^{ \bar v x_m v}\right)}= \left(i^{\bar v x_m v}\right)^{\left( k^{w  \bar v x_m v}\right)}=i^{\bar v x_m v \bar v \bar x_m v \bar w x_k w \bar v x_m v}=
i^{\bar w x_k w \bar v x_m v}.$$
Therefore, the second rack axiom is satisfied.

The function $\tau:\Omega \to \mathcal R$ in injective by Lemma~\ref{omega}. Now suppose $x_i^w \in \mathcal R$ where $1 \le i \le g$. Then {\sc Rep}$(i) \in \Omega$ and  Properties~1--5 imply
$$\tau(\mbox{\sc Rep}(i)^w) = \tau(\mbox{\sc Rep}(i))^w=\tau(i)^w=x_i^w.$$
Therefore, $\tau$ is also surjective.

Finally, assume $i,j \in \Omega$ with $j=k^w$. Now $$\tau(i^j)=\tau(i^{\bar w x_k w})=\tau(i)^{\bar w x_k w}.$$  On the other hand $$\tau(i)^{\tau(j)}=\tau(i)^{\tau(k^w)}=\tau(i)^{(\tau(k)^w)}=\tau(i)^{(x_k^w)}=\tau(i)^{\bar w x_k w}.$$ Thus, $\tau(i^j)=\tau(i)^{\tau(j)}$. Hence,
$\tau(i) = \tau ( i^{\bar j j}) = \tau (i^{\bar j})^{\tau(j)}$. It now follows that $\tau(i^{\bar j})=\tau(i)^{\overline{\tau(j)}}$. Therefore, $\tau$ is a rack isomorphism.
\end{proof}

An important step in the {\sc Enumerate} procedure is the {\sc Define} command in line~\ref{ward} which represents Ward's modification of the Todd-Coxeter process in the rack setting.  This line requires that after scanning and filling all secondary relations for row $i \in \Omega$ we make additional definitions, if necessary, so that $i^y$ is defined for all $y \in S \cup \bar S$.  We do this before moving to the next live row. While this step can increases the size of $\Omega$, it has the benefit of producing a table that is filled in through row $i$ after completing all secondary relation scans for row $i$. This is important in the proof of the following theorem.

\begin{thm}\label{finite} If $\mathcal{R}=\langle S \mid R \rangle$ is a finite rack, then  {\sc Enumerate}$(S,R,M)$ completes for some $M$.
\end{thm}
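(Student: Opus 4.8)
The plan is to argue by contradiction. Suppose that for every run limit $M$ the process {\sc Enumerate}$(S,R,M)$ fails to complete; equivalently, the idealized process with no run limit runs forever. Since the main loop runs only while its index $i$ satisfies $i\le\max(\Omega)$ and $i$ increments on every pass, running forever forces $\max(\Omega)\to\infty$, so {\sc Define} is called infinitely often and infinitely many rows are created. Throughout I will use that Properties~1--5 hold after every step (Propositions~\ref{Definelemma}, \ref{Deductionlemma}, \ref{Coincidencelemma}); that $\rho$ is non-increasing during the run, so a dead row stays dead and $\mbox{\sc Rep}(j)$ is non-increasing and therefore stabilizes; and that, because the process runs forever, the loop index eventually reaches every row, at which point the row is either already dead (and skipped) or is scanned against every relation in $R_2$ and then, by Ward's modification in line~\ref{ward}, has all of its table entries defined.

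Let $L$ denote the set of rows that are never killed; then $1\in L$ and the stable value $\mbox{\sc Rep}(j)$ lies in $L$ for every row $j$. The first step is to show that $\tau|_L\colon L\to\mathcal R$ is a bijection, so $|L|=|\mathcal R|<\infty$. Using Property~2 and Lemma~\ref{repprop} (applied repeatedly and extended to words by an easy induction), together with the fact that every relation in $R_2$ is eventually scanned at every permanently live row, one checks that in the limit the partial operation closes up on $L$: for $j\in L$ and $y\in S\cup\bar S$ the entry $j^y$ is eventually defined, stable, and lies in $L$, and $L$ contains row~$1$ and the eventual least representatives of all generator rows. One can then repeat the proofs of Lemma~\ref{omega} and Theorem~\ref{omegarack} with ``$\mathcal E$ complete'' replaced by ``$\mathcal E$ restricted to $L$ complete'' (which holds in the limit): the substitution-move argument gives $\mbox{\sc Rep}(j)=\mbox{\sc Rep}(k)$ whenever $j,k\in L$ and $\tau(j)=\tau(k)$ --- valid because running forever performs all the relevant primary and secondary scans once and for all --- so $\tau|_L$ is injective, and it is surjective because $L$ is operation-closed and contains the generator rows, which generate $\mathcal R$.

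The second step is to bound the number of rows ever created, contradicting the assumption. Tracing a created row $c$ through its chain of {\sc Define} calls exhibits it as $(\text{generator row }a)^{w}$ in the table, for some $a\le g$ and $w\in F(S)$, with $\tau(c)=x_a^{w}$; by Lemma~\ref{repprop} (for words) this relation is preserved by all subsequent coincidences, so in the limit $\mbox{\sc Rep}(a)^{w}=\mbox{\sc Rep}(c)$, and since $L$ is operation-closed and $\tau|_L$ injective the left-hand side equals the unique $\ell\in L$ with $\tau(\ell)=x_a^{w}$. Hence $\mbox{\sc Rep}(c)=\ell\in L$, so every row outside $L$ eventually dies. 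Combining this with Ward's modification and the fact that rows are processed in increasing index order, one argues that once the loop index has passed $\max(L)$ and the finitely many transient rows still alive at that moment, every spurious row has collapsed into $L$ before the loop reaches it; consequently no new rows are created past a certain point, since processing an $L$-row creates nothing (it is already full and $R_2$-closed) while all larger rows are already dead. Then $\omega$, hence $\max(\Omega)$, stabilizes while the loop index keeps increasing, so eventually $i>\max(\Omega)$ and the process completes --- a contradiction.

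I expect the main obstacle to be the final timing argument: showing that the total number of created rows is finite, i.e.\ that Ward's modification together with in-order processing forces every spurious row to be recognized as dead no later than the loop reaches it (so that, among other things, the depths in the {\sc Define} tree stay bounded and the row count cannot keep growing). This is exactly where Ward's modification (line~\ref{ward}) is indispensable; without it the process can fail to terminate even for a finite rack, as the example in Section~5 shows.
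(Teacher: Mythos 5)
Your overall strategy (argue by contradiction; isolate the set $L$ of rows that are never killed; show $\tau$ restricted to $L$ is an isomorphism onto $\mathcal R$) is the same as the paper's, and your Step 1 is essentially the paper's handling of the case where the permanently live set is infinite, run in the other direction: the limit table restricted to $L$ is complete, satisfies Properties 1--5, and every primary and secondary scan has been performed at the permanently live rows, so the arguments of Lemma~\ref{omega} and Theorem~\ref{omegarack} apply and give $|L|=|\mathcal R|<\infty$. That part is sound.

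The gap is exactly where you flagged it, and it is genuine. Nothing in your Step 2 proves that ``every spurious row has collapsed into $L$ before the loop reaches it,'' and the subsidiary claim that ``processing an $L$-row creates nothing (it is already full and $R_2$-closed)'' is not available either: each row index is processed exactly once, and at that moment its entries need not yet be defined --- they are filled during that very processing, which can create new rows. A transient row that is still alive when the loop reaches it can spawn further transient rows ahead of the loop index, and your argument supplies no mechanism ruling out this regress; as written it assumes the conclusion (that row creation stops). The paper closes this with a global snapshot rather than a local timing claim. Since $L$ (the paper's $\overline{\Omega}$) is finite, choose $N$ so large that (i) for every $i\in L$ all entries $i^y$, $y\in S\cup\bar S$, are defined (this uses line~\ref{ward}) and have reached their stable values, which lie in $L$, and (ii) every generator row not in $L$ is already dead. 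At the moment the loop index passes $N$, Property~4 says each live row equals $i^w$ for some live generator row $i\le g$ and $w\in F(S)$; such an $i$ lies in $L$, and since the entries of $L$-rows are defined and stay inside $L$, the value $i^w$ read off the table lies in $L$. Hence the live set at that moment is exactly $L$, so $\max(\Omega)=\max(L)\le N$ is below the loop index and {\sc Enumerate}$(S,R,N)$ completes --- the desired contradiction --- with no need for transient rows to be dead by the time the loop reaches them. If you want to keep your formulation (``only finitely many rows are ever created''), this snapshot argument via Property~4 and the stabilization of the $L$-rows' entries is the missing ingredient; the death-before-reached claim is both unproven and unnecessary.
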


\begin{proof}  Towards contradiction, assume $\mathcal R$ is finite and that {\sc Enumerate}$(S,R,M)$ returns a run limit exceeded statement for all $M \ge 1$. For a fixed $M$, let $\Omega_M$ be the live elements at the completion of
{\sc Enumerate}$(S,R,M)$ and define $\overline{\Omega} = \cap_{M \ge 1} \Omega_M$. Thus, $\overline \Omega$ is the set of elements that are not killed in any call of {\sc Coincidence}. By Property 1, we have $1 \in \overline{\Omega},$ so this set is nonempty. Now if $i \in \overline{\Omega}$ and $y \in S \cup \bar{S}$, then for all $M \ge i$ we have that line~\ref{ward} of {\sc Enumerate}$(S,R,M)$ guarantees that $i^y$ is defined. Notice that, as we increase $M$, the values of $i^y$ are nonincreasing since {\sc Coincidence} replaces dead values with their least representative. The values of  $i^y$ are  bounded below by $1$, therefore, at some point $i^y$ becomes stable. Since $S \cup \bar{S}$ is finite, this implies that given $i \in \overline{\Omega}$ there is an $M_i \ge i$ such that $i^y$ is defined and stable for all $M \ge M_i$ and for all $y \in S \cup \bar S$. Notice also that the stable value of $i^y$ is in $\overline{\Omega}$ for all $y \in S \cup \bar S$.

If $\overline \Omega$ were finite, then the set $\{M_i \mid i \in \overline{\Omega} \}$ would also be finite. In this case, we could choose $N \ge\max \{M_i \mid i \in \overline{\Omega} \}$ and {\sc Enumerate}$(S,R,N)$ would create a enumeration table in which $i^y$ is defined and $i^y \in \overline{\Omega}$ for all $i \in \overline{\Omega}$ and $y \in S \cup \bar S$. Hence, for all $w \in F(S)$, we would have $i^w \in \overline{\Omega}$ as well. Increase $N$, if necessary, so that $\Omega_{N}\cap \{1,2,\dots,g\}= \overline{\Omega}  \cap \{1,2,\dots,g\}$.  Now by Property~4, for any $n \in \Omega_{N}$,  there exists an $i \in \overline{\Omega} \cap \{1,2,\dots, g\}$ and $w \in F(S)$ such that $i^w=n$. However, $i \in \overline \Omega$ implies $i^w = n \in \overline{\Omega}$ and, hence, $\overline{\Omega} = \Omega_{N}$. Since $N \ge M_i \ge i$ for all $i \in \overline \Omega = \Omega_N$, this implies that {\sc Enumerate}$(S,R,N)$ completes. Hence we have a contradiction. Therefore, $\overline \Omega$ must be infinite. 

Now consider the infinite enumeration table $\mathcal T_{\infty}$ whose (infinitely many) rows are the elements of $\overline{\Omega}$ and whose entries are the stable values of $i^y$ for $i \in \overline{\Omega}$. This table is complete and satisfies Properties 1--5. Therefore, by the argument in Theorem~\ref{omegarack}, we have that $\tau : \overline{\Omega} \rightarrow \mathcal R$ is a rack isomorphism. This contradicts that $\mathcal R $ is finite. \end{proof}

The following is an immediate corollary of Theorems~\ref{omegarack} and \ref{finite}.

\begin{cor} A finitely presented rack $\mathcal R = \langle S \mid R \rangle$ is finite if and only if there is an $M$ such that $\mbox{\sc Enumerate}(S,R,M)$ completes.
\end{cor}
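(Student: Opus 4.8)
The plan is to derive the corollary directly from the two theorems already in hand, Theorem~\ref{omegarack} and Theorem~\ref{finite}, without any new machinery. The statement is a biconditional, so I would organize the proof as two implications and handle each in a single short paragraph.

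First I would prove the ``only if'' direction: if $\mathcal R$ is finite, then there is an $M$ with $\mbox{\sc Enumerate}(S,R,M)$ completing. This is precisely the content of Theorem~\ref{finite}, so this direction is immediate — I would simply cite it.

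For the converse, suppose there is an $M$ such that $\mbox{\sc Enumerate}(S,R,M)$ completes. Then by Theorem~\ref{omegarack}, the set $\Omega$ of live elements of the resulting table $\mathcal E$, equipped with the operations~(\ref{omegaops}), is a rack and $\tau:\Omega\to\mathcal R$ is a rack isomorphism. Since a completing run produces a table with finitely many rows, $\Omega$ is a finite set, hence $\mathcal R\cong\Omega$ is finite.

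There is essentially no obstacle here: both directions are one-line appeals to theorems already established, and the only thing to note in passing is that ``completes'' entails that the table has finitely many rows (the table $\mathcal E$ is a genuine finite rectangular array, by the definition of enumeration table, since $\omega$ is a natural number), so $\Omega\subseteq\{1,\dots,\omega\}$ is automatically finite. I would write the proof in two or three sentences and flag that it is an immediate consequence of the preceding two theorems, exactly as the surrounding text already announces.
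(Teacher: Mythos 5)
Your proposal is correct and matches the paper exactly: the paper presents this corollary as an immediate consequence of Theorems~\ref{omegarack} and \ref{finite}, with the ``only if'' direction given by Theorem~\ref{finite} and the converse by Theorem~\ref{omegarack} together with the finiteness of the completed table. Your added remark that completion yields finitely many rows (so $\Omega$ is finite) is the same implicit observation the paper relies on.
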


\section{Modifications to {\sc Enumerate}}

Recall that the {\sc Define} command in line~\ref{ward} of the {\sc Enumerate} procedure represents Ward's modification to the Todd-Coxeter procedure in the rack setting. Ward's modification was motivated by examples, given in \cite{WA}, where the original Todd-Coxeter process failed to complete even though the subgroup index was finite. A similar example exists in the rack setting. Consider the rack $\mathcal R$ with presentation
\begin{equation}\langle a ,\ b\ \mid a^a=a,\ b^b=b,\ a^{baba\bar b}=a,\ b^{baba \bar b}=b\rangle.
\label{counterexample}
\end{equation}
Then {\sc Enumerate}$(S, R, 11)$ completes with $\mathcal E$ shown below
\medskip
\begin{center}
\begin{small}
\begin{tabular}{c|cccccc} 
$\mathcal E$ & $a$ & $b$ & $\bar{a}$ & $\bar{b}$ & $\tau$ & $p$\\ \hline
1 & 1 & 1 & 1 & 1 & $a$ & 1\\ 
2 & 6 & 2 & 6 & 2 & $b$ & 2 \\
6 & 2 & 6 & 2 & 6 & $b^a$ & 6
\end{tabular}
\end{small}
\end{center}
\medskip
From the table it is clear that $\mathcal R$ is an involutory quandle of order 3. However, if we omit the command in line~\ref{ward} of {\sc Enumerate}, then the process never completes. 

\begin{prop}\label{wardexample} If {\sc W}$(S,R,M)$ is the rack enumeration procedure with line~\ref{ward} omitted,  then there is no $M$ for which {\sc W}$(S,R,M)$ completes for the finite rack presented by (\ref{counterexample}).
\end{prop}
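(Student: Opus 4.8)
The plan is to run {\sc W} on the presentation (\ref{counterexample}) and exhibit an infinite family of rows that are created before the main-loop counter reaches them and are never afterward killed; given such a family, {\sc W}$(S,R,M)$ can never leave its while loop through the test $i\le\max(\Omega)$, so for every $M$ it leaves only through $i\le M$ and hence never reaches line~\ref{complete}. The first step is to reduce to a statement about a single computation: {\sc W} differs from {\sc Enumerate} only by deleting line~\ref{ward}, the sole creator of new rows is the {\sc Define} call on line~\ref{sfdefine} inside {\sc Scan}, and the whole process is deterministic, so running {\sc W} with a larger run limit merely prolongs the same run. It therefore suffices to analyze that run.

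For the setup, observe that the quandle relations $a^a=a$ and $b^b=b$ contribute empty secondary relations, so $R_2=(w_3,w_4)$ with $w_3=b\bar a\bar b\bar a\bar b\,a\,baba\bar b\,\bar a$ and $w_4=b\bar a\bar b\bar a\,baba\bar b\,\bar b$ after reduction. I would display the table obtained once the four primary relations have been scanned: rows $1,\dots,7$, with rows $1,2$ the generators, the relation $a^{baba\bar b}=a$ producing rows $3,4,5$ with $1^b=3$, $3^a=4$, $4^b=5$ and the deduction $5^a=3$, and $b^{baba\bar b}=b$ producing the analogous rows $6,7$. Then I would run the first few rounds of the main loop, displaying the tables, to expose the recurring behavior: processing row $1$ scans $w_4$ and creates three new rows, the first being row $8=1^{bb}$; processing row $2$ scans $w_3$ and causes the coincidence $6\equiv7$, after which rows $2,6$ form a complete two-row block that is never disturbed again; processing row $3$ causes a coincidence $9\equiv5$ among the rows already present and then, scanning $w_4$, creates three more new rows, the first being $11=1^{bbb}$; and so on.

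The heart of the argument --- and the step I expect to be the main obstacle --- is an induction on $k$ establishing the following. Write $c_k:=1^{b^k}$ for the relevant row of $\mathcal E$, so $c_0=1$ and $c_{k+1}=c_k^b$ once that entry is defined. Then for every $k\ge0$ the row $c_k$ becomes defined at some point in the run --- for $k\ge2$ precisely while row $c_{k-2}$ is being processed, where it is introduced as the new row $c_{k-1}^b$, and for $k\le1$ already during the primary scans --- and from that point on it is live, satisfies $c_k^b=c_{k+1}$ in $\mathcal E$, and is never killed; in particular $c_0<c_1<c_2<\cdots$. To run this induction one must carry along a description of a bounded window of the table near $c_{k-2}$ and $c_{k-1}$ --- including the fate of the auxiliary rows $c_j^a$ and their $b$-images, some of which turn out to be the larger element of a coincidence and get merged away --- strong enough to check both (i) that the forward scan of $w_4$ started at $c_{k-2}$ returns to $c_{k-1}$ with $c_{k-1}^b$ still undefined, so that a new row $c_k$ is defined there, and (ii) that no coincidence arising while scanning $w_3$ or $w_4$ at any row ever identifies some $c_k$ with a strictly smaller row. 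The bookkeeping is finite but delicate, and the real work is choosing the induction hypothesis to name exactly the right finite portion of the evolving table. This is where deleting line~\ref{ward} matters: certain entries, such as $1^{\bar b}$, are never filled, and that is precisely what keeps forcing the next definition.

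Finally I would assemble the conclusion. Fix $M$ and consider the moment {\sc W}$(S,R,M)$ tests its while condition with counter value $i=n$; the case $n=1$ is immediate from the initial table (where $\max(\Omega)=7$), so assume $n\ge2$. By then every row numbered below $n$ has been fully processed, so, letting $l$ be the largest index with $c_l<n$, row $c_l$ has already been processed, and by the induction the row $c_{l+2}$ was created then and is still live, with $c_{l+2}>c_{l+1}\ge n=i$. Hence $\max(\Omega)\ge c_{l+2}>i$, the test $i\le\max(\Omega)$ holds, and the loop can terminate only when $i>M$; thus {\sc W}$(S,R,M)$ returns ``run limit exceeded'' and never reaches line~\ref{complete}. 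Since $M$ was arbitrary, {\sc W}$(S,R,M)$ completes for no $M$.
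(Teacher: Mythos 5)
Your overall strategy is the same as the paper's: follow the single deterministic run, verify its opening segment explicitly, and then argue by induction that some live row always lies beyond the loop counter, so that for every $M$ the loop can only exit through the run limit. Your explicit computations through the processing of row $3$ are correct (rows $1$--$7$ from the primary scans, rows $8,9,10$ and then the coincidences $6\equiv 7$ and $9\equiv 5$, rows $11,12,13$ with $12$ merged into $10$ at row $4$), and the final assembly from an invariant of this kind to the conclusion is fine. The problem is the invariant itself, which is exactly the part you defer as ``the real work.'' It is false of the actual run. Tracing one step further: while row $4$ is processed, the $w_3$-scan defines $4^a=14$, $14^b=15$, $15^a=16$, $16^b=17$ with the deduction $17^a=15$, and the $w_4$-scan produces the coincidence $12\equiv 10$, which installs $10^b=13$ and $10^{\bar a}=11$. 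Consequently, when row $5$ is processed, the $w_4$-scan runs $5\to 10\to 11\to 8\to 10\to 13\to 11$ and stalls at $11^b$, so $c_4=1^{b^4}$ is created there as row $18$ --- while row $5$, not row $c_2=8$, is being processed; likewise $c_5=18^b$ is created as row $20$ while row $10$ is processed (the scan at row $8$ instead defines $13^b=19$, and at row $11$ it defines $19^b=21$). So your claim that $c_k$ is created ``precisely while row $c_{k-2}$ is being processed'' already fails at $k=4$, and the induction as you have set it up cannot be carried through.

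The deeper issue is the other half of your hypothesis, that the rows $1^{b^k}$ are \emph{never} killed; this is the delicate bookkeeping you acknowledge not having done, and it is in tension with what the run actually exhibits. The persistent frontier the paper's sketch tracks is not your $b$-spine but the $a$-chain $n_0=4$, $n_1=4^a=14$, $n_2=14^a=23,\dots$, each $n_i$ created by the $w_3$-scan when row $n_{i-1}$ is processed, together with its four-row block (e.g.\ rows $14,15,16,17$ and $23,24,25,26$, exactly matching the displayed fragment in the paper), and in that pattern the head of the previous block is already dead, with $\rho$-value $4$, by the time the counter reaches the new block: older rows keep getting merged away as the process advances, so permanent liveness of any fixed family is not something one may simply assert. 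The paper also leaves the inductive verification to the reader, but its stated invariant is consistent with the run; yours is not, so the heart of the proof --- a correct, checkable induction hypothesis describing the evolving frontier --- is missing from your proposal.
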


Strong evidence for the veracity of the proposition can be obtained by coding the {\sc Enumerate} procedure with line~\ref{ward} omitted and running it for presentation (\ref{counterexample}) with large values of $M$. A formal proof of the remark can be given by using induction to prove that there is a sequence $4=n_0<n_1<n_2<n_3< \cdots$ with the property that, for all $i \ge 1$,  {\sc W}$(S,R,n_i-1)$ produces a run limit exceeded statement and an incomplete table which contains the following lines (here, a $0$ in $\mathcal E$ represents an undefined entry).
$$\begin{small}
\begin{array}{c|cccc|c} 
\mathcal T & a & b & \bar a& \bar b & \rho\\ \hline
\multicolumn{6}{c}{\vdots} \\
n_{i-1} & n_i & * & * & 0 & 4 \\
\multicolumn{6}{c}{\vdots} \\
n_i & 0 & n_i+1 & n_{i-1} & 0 & n_i \\
n_i+1& n_i+2 & 0 & n_i+3 & n_i & n_i+1 \\
n_i+2 & 0 & n_i+3 & n_i+1 & 0 & n_i+2 \\
n_i+3 & n_i+1 & 0 & 0 & n_i+2 & n_i+3 \\
\multicolumn{6}{c}{\vdots}
\end{array}
\end{small}$$
The inductive step requires a careful analysis of the helper tables for the scans of the secondary relations for rows $1$ through $n_{i+1}-1$ and how they affect the entries of $\mathcal E$. We leave the details for the interested reader.

The inclusion of line~\ref{ward} is a simple way to avoid the problem in Proposition~\ref{wardexample}, however, there may be other ways to alter {\sc Enumerate} to achieve this.  For example, our {\sc Init} routine does not attempt to produce the most efficient set of secondary relations. Notice that if $x^w=x$ for all $x$, then $x^{\bar w} =x$ for all $x$ as well.  For this reason it is unnecessary to have both $w$ and $\bar w$ in the set $R_2$ of secondary relations.  Similarly, if $x^{u v}=x$ is a secondary relation and given any $y \in \mathcal R$, if we let $x = y^{\bar u}$, then the secondary relation gives $y^{\bar u u v} = y^u$. Hence, $y^{vu}=y$ for all $y \in \mathcal R$.  Therefore, given any secondary relation $x^w=x$, we are free to cyclically permute the letters in $w$ to obtain an equivalent secondary relation.  This allows us to record any secondary relation $x^w=x$ with the unique word $w'$ which is {\em minimal} amongst all words obtained from $w$ and $\bar w$  by cyclic permutation and reduction. Here minimal means of shortest length and, among words of the same length, lexicographically smallest where the order on $S \cup \bar S$ is $x_1 < x_2 < ... < x_g < \bar x_g  < ... < \bar x_1$. 

Consider once more the rack $\mathcal R$ defined by (\ref{counterexample}).  With {\sc Init} defined by Algorithm~\ref{init}, the set of secondary relation words is $R_2 = \{b \bar a \bar b \bar a \bar b a b a b a  \bar b \bar a, b \bar a \bar b \bar a b a b a \bar b \bar b\}$. On the other hand, if instead we consider the minimal representatives of these words, then our secondary relations would be 
$R'_2 = \{a b a b a  \bar b \bar a b \bar a \bar b \bar a \bar b , a b a b \bar a \bar b \bar a \bar b\}$. Running a modification of {\sc Enumerate} with line~\ref{ward} omitted and $R_2$ replaced by $R'_2$, the process completes. We do not know if this is true in general, that is, if the secondary relations are chosen in this way, then is line~\ref{ward} still necessary?

As mentioned in the introduction, the Todd-Coxeter process was designed to find the index of a finitely generated subgroup of a group. That is, it  is designed to enumerate cosets. The order and Cayley graph for the group can be found by enumerating the cosets of the trivial subgroup. It is natural to ask if the \mbox{\sc Enumerate} process can be modified to enumerate something more general than the elements of the rack.  The natural analogy is to consider a finitely generated subrack $\Sigma \subseteq \mathcal R = \langle S \mid R \rangle$. If $x \in \mathcal R$, then define the \mbox{\bf rack coset} $\Sigma^x$ to be the set of elements $\{\sigma^x \mid \sigma \in \Sigma\}$. Unfortunately, the collection of all rack cosets of a given subrack does not, in general, partition the rack. We consider three interesting examples.

First, consider the fundamental 4-quandle of the righthand trefoil knot given by the presentation
$$\mathcal R_1 = \langle a, b \mid a^a=a, b^b=b, a^{bbbb}=a, b^{aaaa}=b, a^{ba}=b, b^{ab}=a \rangle.$$
From the {\sc Enumerate} process we find that $\mathcal R_1 = \{a,b,a^b,b^a,a^{bb},b^{aa} \}$. Moreover, it is not hard to show that $\Sigma = \langle a, a^{bb} \rangle = \{a, a^{bb}\}$ is a subrack (in fact, a subquandle). By direct calculation, this subrack has three disinct cosets
$\Sigma$, $\Sigma^b$, and $\Sigma^{ba}$ which partition $\mathcal R_1$. The {\sc Enumerate} process can be modified to enumerate these cosets and determine the action of $\mathcal R_1$ on the cosets.  Namely, initialize the process with $1$ representing the coset $\Sigma$.  Since $\Sigma$ is a subrack, it is fixed by the action of both generators $a$ and $a^{bb}$. So the modified process first scans these two subrack generator relations: $1^a=1$ and $1^{\bar b \bar b a b b}=1$. Next the modified process scans all secondary relations (from the presentation of $\mathcal R_1$) and an additional secondary relation $i^{\bar b \bar b a b b \bar a}=i$, since the action by different generators of $\Sigma$ should be the same, for all live $i$.  The modified process completes and enumerates the three distinct cosets given above.

As a second example, consider the involutory quandle of the $(2,4)$-torus link which has a presentation
$$\mathcal R_2 = \langle a, b \mid a^a=a, b^b=b, a^{bb}=a, b^{aa}=b, a^{bab}=a, b^{aba}=b \rangle.$$
The {\sc Enumerate} process determines that $\mathcal R_2 = \{a,b,a^b,b^a \}$. This rack contains the subrack $\Sigma = \langle a, a^b \rangle = \{a, a^b\}$. Notice that $\Sigma = \Sigma^a = \Sigma^b = \Sigma^{a^b}= \Sigma^{b^a}$ and so the cosets of the subrack do not partition $\mathcal R_2$.  On the other hand, consider the rack with presentation 
$$\mathcal R_3 = \langle a, b, c \mid a^a=a, b^b=b, c^c=c, a^b=a, a^c=a, b^a=b, ,c^a=c, b^{ccb}=c, b^{\bar cbb}=c \rangle$$
which has order six. The subrack $\Sigma = \langle a , b\rangle = \{a, b\}$ has five distinct cosets all of which contain $a$ and whose union is the entire rack. It is not immediately clear how the enumeration process can be modified in these last two examples in order to enumerate the distinct cosets.

\section{An application to knot theory}

We close with a sample calculation related to knot theory. Associated to every link is its fundamental quandle, which, of course, is a rack. However, the quandle of a knot or link is almost always infinite. If we pass to the quotient  2-quandle,  then there are many knots and links for which this is finite. A complete list of links with finite $n$-quandles for some $n$ is given in \cite{HS}. One such link is shown in Figure~\ref{link}. 

\begin{figure}[htbp]
\vspace*{13pt}
\centerline{\includegraphics*[scale=.6]{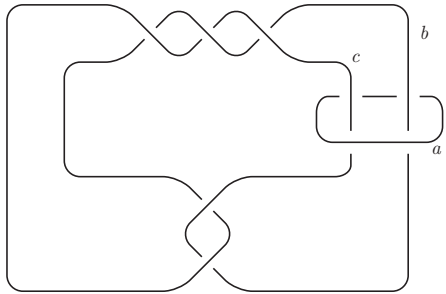}}
\caption{A link with finite 2-quandle.}
\label{link}
\end{figure}
A presentation for the 2-quandle of the link can be obtained from the diagram by labeling each arc of the diagram with a generator and then recording one relation at every crossing as indicated in Figure~\ref{relation}. In addition to these relations, we must also include the relations $x^x=x$ for every generator  and $x^{yy}=x$ for every pair of distinct generators $x$ and $y$. See \cite{HS} for more information on presentations of 2-quandles of links. If we use one generator for each arc, we will create a presentation with redundant generators. Instead, it is always possible to label some subset of the arcs with generators and then use the relations at each crossing to derive the labels on all of the other arcs. Arcs for which labels can be so derived in two different ways then give rise to the necessary relations.

\begin{figure}[htbp]
\vspace*{13pt}
\centerline{\includegraphics*[scale=.45]{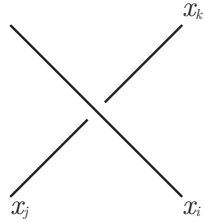}}
\caption{This crossing gives the relation $x_j^{x_i}=x_k$.}
\label{relation}
\end{figure}
If we label the three arcs shown in Figure~\ref{link} with the generators $a$, $b$, $c$, and follow the above procedure, we obtain the presentation
\begin{equation*}
\begin{split}\mathcal R=\langle a, b, c, &\mid  a^a=a, b^b=b, c^c=c,\\ 
&\quad a^{bb}=a, a^{cc}=a, b^{aa}=b, b^{cc}=b, c^{aa}=c, c^{bb}=c,\\ &\quad a^{bc}=a, b^{acbcbca}=c, b^{cbcacacb}=c \rangle.
\end{split}
\end{equation*}

Applying {\sc Enumerate} to this presentation yields a finite 2-quandle with two algebraic components corresponding to the  two components of the link. One algebraic component has four elements including the generator $a$ and the other has twenty elements including the generators 
$b$ and $c$. The Cayley graph of the 2-quandle can be immediately derived from the enumeration table and is shown in Figure~\ref{cayley}. The generators $a, b$, and  $c$, correspond to the solid, dashed, and dotted edges, respectively.

\begin{figure}[htbp]
\vspace*{13pt}
\centerline{\includegraphics*[scale=.85]{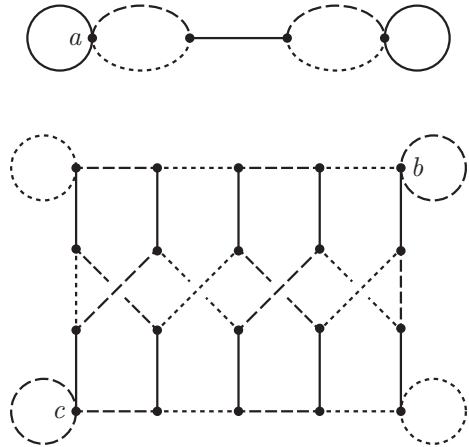}}
\caption{The 2-quandle of the link in Figure~\ref{link}.}
\label{cayley}
\end{figure}

\section{Implementation}

Since its implementation on a computer, there have been multiple modifications made to the Todd-Coxeter process that decrease the run-time or memory usage. In \cite{HO}, Holt characterizes the performance of a coset enumeration process in terms of the maximum number of live elements at any stage of the process. That is, the maximum value of $|\Omega|$ at any point. He also remarks that the total number of cosets defined would also be a reasonable measure. Holt declares a procedure to perform well if $\max \{|\Omega| \}$ is roughly less than $125\%$ of the index $[G:H]$.

We apply this analysis to the enumeration of involutory quandles of a family of links. In \cite{HS2}, it was shown that the order of the involutory quandle of the $(1/2,1/2,p/q;e)$-Montesinos link is $2(q+1)|(e-1)q-p|$. A selection of 21, 15, and 16, such quandles with orders near 10000, 20000, and 30000, respectively, were used. Run-times varied from 3.8 to 16.3 seconds for the first group, 18.7 to 41.3 seconds for the second group, and 24.2 to 166.4 seconds for the last group. Not surprisingly, run-times were roughly proportional to the number of elements defined during execution. This number ranged from about 40 to 320 times the order of the quandle. The largest number of live elements during execution was generally between  12 to 145 times the order of the quandle.  We coded our implementation of {\sc Enumerate} using {\em Python} and our program may be downloaded from the CompuTop.org software archive. A small selection of data is included in Table~\ref{performance data}. In this table, $t$ is the run-time in seconds, $L$ is the maximum value  of $|\Omega|$ at any point in the process, $E$ is the total number of quandle elements defined by the process, and $\mathcal O$ is the order of the quandle. We suspect that our procedure could be improved in order to perform well with respect to Holt's measure.

\begin{table}
$$
\begin{array}{rrrrrrrrr}
 p&q&e&t&\mathcal O&L&E&L/ \mathcal O&
E/ \mathcal O\\
\hline
 2 & 23 & 2 & 0.11 & 1008 & 4646 & 8109 & 4.6 & 8.0 \\
 53 & 61 & 2 & 0.20 & 992 & 9832 & 22148 & 9.9 & 22.3 \\
2 & 49 & -1 & 4.61 & 10000 & 117876 & 615021 & 11.8 & 61.5 \\
 2 & 11 & 5 & 0.24 & 1008 & 13893 & 30482 & 13.8 & 30.2 \\
2 & 61 & 5 & 25.84 & 30008 & 559137 & 2483138 & 18.6 & 82.7 \\
 4 & 41 & 4 & 4.91 & 9996 & 198559 & 593150 & 19.9 & 59.3 \\
 31 & 39 & 5 & 7.56 & 10000 & 367832 & 1039894 & 36.8 & 104.0 \\
 4 & 49 & -3 & 18.73 & 20000 & 838911 & 2312936 & 41.9 & 115.6 \\
 5 & 9 & -4 & 0.46 & 1000 & 47906 & 64245 & 47.9 & 64.2 \\
 19 & 45 & -1 & 10.11 & 10028 & 500924 & 1132344 & 50.0 & 112.9 \\
 27 & 53 & 5 & 28.64 & 19980 & 1194349 & 3942721 & 59.8 & 197.3 \\
39 & 64 & -2 & 87.98 & 30030 & 1954031 & 4726305 & 65.1 & 157.4 \\
 19 & 52 & 5 & 34.04 & 20034 & 1394756 & 4635357 & 69.6 & 231.4 \\
  25 & 64 & 5 & 128.65 & 30030 & 2321654 & 8237209 & 77.3 & 274.3 \\
 31 & 57 & -3 & 126.44 & 30044 & 3027595 & 7312811 & 100.8 & 243.4 \\
 12 & 43 & -4 & 35.56 & 19976 & 2069917 & 4988150 & 103.6 & 249.7 \\
  16 & 39 & -5 & 41.27 & 20000 & 2507287 & 5651463 & 125.4 & 282.6 \\
17 & 27 & -5 & 16.27 & 10024 & 1334984 & 2521252 & 133.2 & 251.5 \\
 31 & 47 & -5 & 166.36 & 30048 & 4338376 & 9511360 & 144.4 & 316.5 \\
\end{array}
$$
\caption{Performance of {\sc Enumerate} for a sample of Montesinos link quandles.}
\label{performance data}
\end{table}

\end{document}